\author{\normalsize{WEIWEN ZHANG}}
\date{}
\title{\large\textbf{ROTH-TYPE THEOREMS IN ADDTIVE COMBINATORICS}}
\numberwithin{figure}{section}
\titleformat{\subsection}{\normalsize\bfseries}{\thesubsection.}{0.5em}{}
\titleformat{\subsubsection}{\normalsize}{\thesubsubsection.}{0.5em}{}
\begin{document}

	\maketitle
	\begin{quotation}
		\textsc{Abstract.}
		In this article we will introduce a central problem in additive combinatorics, which arised from the famous van der Waerden theorem and an early conjecture of Erd\H{o}s and Tur\'{a}n. The first important theorem was due to Roth in 1953. There were a number of generalized or improved  results afterwards, which we call Roth-type theorems. We will list them and try to give concise expositions to the ideas in some of the proofs without much prior knowledge.
	\end{quotation}
	\newtheorem{thm}{\textbf{Theorem}}[section] 
	\newtheorem{cor}[thm]{\textbf{Corollary}}
	\newtheorem{prop}[thm]{\textbf{Proposition}}
	\newtheorem{lmm}[thm]{\textbf{Lemma}}  
	\newtheorem{defi}[thm]{\textbf{Definition}}  
	\newtheorem{clm}[thm]{\textbf{Claim}} 
	\newtheorem{conj}[thm]{Conjecture}
	
	\section{\textsc{Introduction}}\label{1}

	Additive combinatorics has seen a rapid development in recent decades, due to the efforts of many great mathematicians including young ones who worked on the intersection field of combinatorics, number theory, along with ergodic theory and graph theory, which was discovered to be also important and useful in this appearently more "number theoritic" and "combinatorial" field. In the long-standing history of number theory, some of the problems  naturally have combinatorial backgrounds. For example sometimes we need to count certain configurations in a subset of integers. Among them some are linear with respect to addition thus we can transform our problems into sumsets. Counting additive strucutre thus motivated Tao and Vu to coin the word "additive combinatorics" for this topic. Their textbook of the same name\cite{AC} is a standard material, whilst it doesn't cover more recent developments. For external reading we recommend the booklet by Green\cite{ID} and the textbook by Zhao\cite{GTAC}.\par
	In additive combinatorics, the major object we are concerned about is the additive set, i.e., some $A\subset G$, where $(G,+)$ is an Abelian group, known as the ambient group of $A$.  For example, the most frequently used setting is the cyclic group $\mathbb{Z}$ or $\mathbb{Z}/N \mathbb{Z}$, the Euclidean space $\mathbb{R}^{n}$ and the vector space over a finite field $\mathbb{F}_{q}^{n}$.\par
	The most important two perspectives of an additive set $A$ are its size and structure. The size may be regarded as the global information while there are complex structures that reflect the local information of $A$. For example, we may ask how close $A$ is to a subgroup $H\leqslant G$, whether $A$ contains a special pattern such as $\{x,x+d,x+2d\}$ for some $x\in G,d\neq 0$. Naturally problems arise from the transformation between these two perspectives. Or how well exactly can we deduce complex local information from simple global information?\vspace{2ex}\par
	\textit{From size to structure.} Given certain structures of a set, we can bound from above or below its size. Since the aim of Roth-type problems is to give the upper bound, we now here give an example of estimating the lower bound. A Kakeya set in $\mathbb{R}^{n}(n\geqslant 2)$ is a compact subset that contains a unit line segment pointing in every direction. The famous Kakeya conjecture asserts that:
	\begin{conj}
		Any Kakeya set in $\mathbb{R}^{n}(n\geqslant 2)$ has \textit{Minkowski} and \textit{Hausdorff} dimension $n$.
	\end{conj}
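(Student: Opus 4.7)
The plan is to attack the statement in its $\delta$-discretized form: fix $\delta>0$, take a maximal $\delta$-separated set of directions, and replace the Kakeya set by the union of the roughly $\delta^{-(n-1)}$ tubes of length one and radius $\delta$ pointing in these directions. The conjecture becomes equivalent to showing that this union always has volume at least $\delta^{o(1)}$, and both the Hausdorff and Minkowski dimension statements then follow by a standard covering and limit argument. First I would dispose of $n=2$ by the classical Davies argument: two non-parallel unit tubes at angular separation $\theta$ meet in area $O(\delta^{2}/\theta)$, so summing over pairs and applying Cauchy--Schwarz gives a total area of at least $c/|\log\delta|$, which is enough for full dimension.

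For $n\geq 3$ the natural opening move is Wolff's \textit{hairbrush} argument. One fixes a single tube $T_{0}$ and considers the bush of tubes that meet $T_{0}$; an inductive application of the two-dimensional Kakeya estimate inside the planes through $T_{0}$, followed by a popularity/pigeonhole step to select a dense sub-hairbrush, yields a Minkowski-dimension lower bound of $(n+2)/2$. To push further one would try to enlarge the stem from a line to a lower-dimensional variety, exploiting more refined incidence inequalities such as the multilinear Kakeya estimate of Bennett--Carbery--Tao, and then feed the resulting gain back through an induction-on-scales, in the spirit of Guth's polynomial-partitioning approach and the recent three-dimensional resolution by Wang and Zahl via sticky--plany--grainy structure theorems.

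The main obstacle, and the reason this is really only a proposal, is that the conjecture is open for $n\geq 4$: no method currently extracts enough non-concentration from $\delta^{-(n-1)}$ essentially arbitrary tubes to close the gap between the state-of-the-art bounds and the target dimension $n$. The finite-field polynomial method of Dvir does not transfer to $\mathbb{R}^{n}$ because continuous tubes are too flexible to force vanishing of a low-degree polynomial, and the Bourgain--Katz--Tao arithmetic approach via sum--product only marginally improves on Wolff. So the honest form of the plan is: run Wolff for a baseline, then attempt to combine decoupling, multilinear Kakeya, and a multi-scale sticky/plany dichotomy to upgrade the exponent; a genuine proof in full generality would require a new geometric invariant that is not presently available.
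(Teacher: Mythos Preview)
There is nothing to compare against: the paper does not prove this statement. It is labelled as a \emph{conjecture} and the surrounding text says explicitly that only the cases $n=2$ (Davies) and $n=3$ (Wang--Zahl) are settled, while higher dimensions remain open. So the ``paper's own proof'' does not exist.

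Your proposal is therefore not wrong so much as accurately self-aware: you correctly identify that no full proof is available for $n\geq 4$, and your final paragraph says so. The survey you give of partial progress --- Davies' $L^2$/Cauchy--Schwarz argument in the plane, Wolff's hairbrush bound $(n+2)/2$, multilinear Kakeya, polynomial partitioning, and the sticky/plany/grainy structure theorems behind the three-dimensional resolution --- is a fair account of the landscape. The one caveat is that this is a plan for reproducing known partial results, not a plan that could plausibly close the remaining gap; you acknowledge as much when you say a genuine proof ``would require a new geometric invariant that is not presently available.'' That is the honest situation, and it matches what the paper asserts.
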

	Up to now, only the cases $n=2$(due to Davies\cite{kk2}) and $n=3$(due to Wang-Zahl\cite{kk3}) were solved but higher dimensions remain open. Later we shall mention the corresponding Kakeya set conjecture in finite field setting, which was solved by Dvir\cite{kkf}.
	\vspace{2ex}\par
	\textit{From size to structure.} This direction, known as the inverse problem, has been the focus of additive combinatorics. Here the size information we mostly exploit are the double constant and the additive energy, which we shall define later. There have been celebrated results, for instance, the Balog-Szemer\'{e}di-Gowers theorem(see \cite[Theorem 2.29]{AC}), the Bogolyubov-Ruzsa lemma(see \cite[Theorem 1.1]{BR}), the polynomial Freiman-Rusza conjecture(see \cite[Conjecture 5.34]{AC}), etc. They characterize a set by "inner" or "outer" structures when its double constant is small or its additive energy is large. Here we give a version of the polynomial Freiman-Rusza conjecture in characteristic 2.
	\begin{conj}
		Suppose $A\subset\mathbb{F}_{2}^{n}$ is a set with $|A+A|\leqslant{K|A|}.$ Then $A$ is covered by at most $2K^{C}$ cosets of some subgroup $H\leqslant{\mathbb{F}_{2}^{n}}$ of size at most $|A|$.
	\end{conj}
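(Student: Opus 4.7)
The plan is to follow the recent Gowers--Green--Manners--Tao resolution of the conjecture, which proceeds via an entropic reformulation rather than a direct combinatorial attack. I would first let $X$ be the random variable uniformly distributed on $A$ and translate the doubling hypothesis into the statement that the \emph{entropic Ruzsa distance} $d[X;X] := H(X_1+X_2) - H(X)$ satisfies $d[X;X] \leq \log K$, where $X_1,X_2$ are independent copies of $X$. The combinatorial conclusion will then be deduced from an entropic PFR statement of the shape: if $d[X;X] \leq \log K$, then there is a subgroup $H \leq \mathbb{F}_2^n$ and a translate $U_H+t$ with $d[X;\,U_H+t] \lesssim \log K$. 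Since characteristic $2$ allows us to replace every minus sign by a plus sign, the manipulations of sums and differences become genuinely symmetric, which is what makes the argument tractable in this setting.

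Next I would prove the entropic PFR by a compactness-plus-compression argument. Consider the functional
\[
\Phi(X,Y) := d[X;Y] + \eta\bigl(d[X;X] + d[Y;Y]\bigr)
\]
for a small parameter $\eta>0$, and take a minimizer $(X,Y)$ over pairs with $d[X;Y]$ bounded by a fixed constant multiple of $\log K$. The heart of the matter is to show that at a minimizer the pair must essentially be uniform on cosets of a common subgroup: one introduces four independent copies $X_1,X_2,Y_1,Y_2$, decomposes the entropies of linear combinations such as $(X_1+Y_1,\, X_2+Y_2,\, X_1+X_2+Y_1+Y_2)$ using the submodularity of entropy, and exploits the characteristic-$2$ identity $x+y = x-y$ to obtain cancellations. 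The conclusion of this step is an inequality implying that $d[X;X]$ and $d[Y;Y]$ are both $O(d[X;Y])$, after which one upgrades this to the existence of a genuine subgroup $H$ approximating $X$ by a further iteration and a Balog--Szemer\'edi--Gowers-type extraction.

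From the entropic PFR I would then recover the combinatorial conclusion by a Ruzsa-covering-lemma argument. Concretely, the bound $d[X; U_H] \lesssim \log K$ implies that the distribution of $X$ is supported up to a set of small entropy on a union of $O(K^C)$ cosets of $H$; one then absorbs the exceptional support using a standard covering step to replace $H$, if necessary, by a subgroup of size at most $|A|$, paying a further polynomial factor in $K$ in the number of cosets. This step is essentially a repackaging of Ruzsa's classical techniques and introduces no new quantitative loss beyond a controllable power of $K$.

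The main obstacle is the central step of the entropic argument: producing the right functional to minimize and, at the minimizer, squeezing out the inequality that $d[X;X] + d[Y;Y]$ is controlled by $d[X;Y]$. The difficulty is that naive applications of Shannon-type inequalities lose constants that prevent closing the loop; the clever choice of the auxiliary random variables and the precise way characteristic $2$ is used to collapse additive and subtractive expressions are what make the proof work, and producing these by hand without the hindsight of the GGMT construction would be the real intellectual hurdle.
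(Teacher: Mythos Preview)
The paper does not prove this statement: it is presented as a \emph{conjecture} and the only further comment is that Gowers--Green--Manners--Tao resolved it with $C=12$, with a citation. There is therefore no in-paper proof to compare against; your proposal is a reasonable high-level outline of the GGMT entropic approach and is consistent with the cited resolution, but it goes well beyond anything the paper itself attempts.
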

	Previously we only knew the conjecture was right if we replace the number of cosets with bound worse than the polynomial type, say $2K^{2}2^{K^{4}}$ or $\exp (\log^{4}K)$. In 2023, the team of Gowers-Green-Manners-Tao proved this conjecture with $C=12$\cite{PFR}, which was a tremendous breakthrough.\par
	We shall also remark that the inverse theorems are often strong and by using them we may get better estimates of the size of a set with certain structure. Actually in improving some Roth-type theorems we rely on the Bogolyubov-Ruzsa lemma and several results based on it(see section \ref{4}).
	
	\section{\textsc{Basic Notation and Definitions}}\label{2}
	\subsection{Combinatorial Side}
	Suppose $A\subset G$ is an additive set, we define the \textit{sum set} $A+A:=\{a+b:a,b\in{A}\}$, \textit{difference set} $A-A:=\{a-b:a,b\in{A}\}$; for $k\in{\mathbb{Z^{+}}}: $ \textit{iterated sumset} $kA$ $:=\{a_{1}+\cdots+a_{k}:a_{1},...,a_{k}\in{A}\}$, \textit{dilation} $k\cdot A$ $:=\{ka:a\in{A}\}$. The double constant $d(A):=|A+A|/|A|$.\par
	 For any $N\in{\mathbb{Z^{+}}}$, $[N]:=\{1,2,...,N\}$. An \textit{arithmetic progression} of length $k$(abbr. $k$-AP) is $x+d\cdot([k]-1)=\{x,x+d,\cdots,x+(k-1)d\}$ for some $x,d\in{G}$. We say a $k$-AP is nontrivial if $d\neq{0}.$
	 There is actually an abuse of notation since from only the set of AP we may not be able to tell $x$ and $d$ or even its length. However we don't care about trivial cases and the base point $x$ is not important because we mainly study translation invariant equations in this paper. We can further define the generalized arithmetic progression. For simplicity we leave interested readers to refer to \cite{AC}.
	 
	 \subsection{Analytic Side}
	  \textit{Asymptotic notations}: $X\ll{}Y\iff{}X=O(Y)\iff{}|X|\leqslant{}C\,|Y|$ for some absolute constant $C>0$; $X\ll_{C_{1},...C_{r}}Y$ means $C$ depends on ${C_{1},...C_{r}}$; $X\asymp Y$ if $X\ll Y$ and $Y\ll  X$; \par
 Suppose $A\subset{B}\subset{\mathbb{Z^{+}}},$ the 	 \textit{relative upper density} \[\overline{d}_{B}(A):=\varlimsup\limits_{N\rightarrow{\infty}}{\frac{|A\cap{}[N]|}{|B\cap{}[N]|}}.\]
	 When the limit exists, especailly in the finite setting, we denote instead by $d_{B}(A)$ the relative density. We omit $B$ if it is clear according to the context and just say the density.\par
	 Suppose $G$ is finite, given a probability measure $\mu$ on $G$, which can be regarded as a non-negative real function with $\|\mu\|_{1}=1$(e.g. for $V\subset{G}$, the measure corresponding to $\mu_{V}=\frac{|G|}{|V|}1_{V}$), $f,g:G\to\mathbb{C} $, $1\leqslant{p}<\infty$, we define:\vspace{-2pt}
	 \[\text{\textit{$\mu$-inner product}}\  \langle{f,g}\rangle_{\mu}=\mathop{\mbox{\raisebox{-4pt}{\huge$\mathbb{E}$}}}\limits_{x\in{G}}\mu(x)f(x)\overline{g(x)}=\frac{1}{|G|}\sum\limits_{x\in{G}}\mu(x)f(x)\overline{g(x)}\]
	 \vspace{-8pt}
	 \[L^{p}(d\mu)\text{\textit{-norm} }\|f\|_{p(\mu)}=(\mathop{\mbox{\raisebox{-4pt}{\huge$\mathbb{E}$}}}\limits_{x\in{G}}\mu(x)|f(x)|^{p})^{1/p}
	 \]
	 The dual group of $G$ is Hom$(G,\mathbb{C}^{\times})=\widehat{G}\cong{G}$. The Fourier transformation of $f$ is defined on $\widehat{G}$ as $\widehat{f}(\gamma)=\langle{f,\gamma}\rangle=\mathbb{E}_{x\in{G}}f(x)\overline{\gamma(x)}$, $\forall\gamma\in\widehat{G}$. \textit{Convolution} $f\ast{}g(x)=\mathbb{E}_{y\in{G}}f(y){g(x-y)}$, \textit{difference convolution} $f\circ{}g(x)=\mathbb{E}_{y\in{G}}f(x+y)\overline{g(y)}$.
	  Suppose $G=\mathbb{Z}$, then its dual $\widehat{G}\cong{\mathbb{T}}$. For $f: \mathbb{T}\to\mathbb{C}$, $g: \mathbb{Z}\to\mathbb{C}$, we define their Fourier transformation by\vspace{-4pt} \[\widehat{f}(n)=\int_{\mathbb{T}}f(\theta)e(-n\theta)d\theta, \forall n\in\mathbb{Z};\quad \widehat{g}(\theta)=\sum\limits_{n\in\mathbb{Z}}g(n)e(n\theta), \forall \theta\in\mathbb{T},\setlength\belowdisplayskip{3pt}\]
	 where $e(x):=e^{2\pi{}ix}.$\par
	 For analytic approaches we prefer to work on $\mathbb{T}$ after transference from $\mathbb{Z}$, while for combinatorial approaches we usually need to restrict $\mathbb{Z}$ to $\mathbb{Z}/N\mathbb{Z}$ for large integer $N$ to discretize both the ambient group and its dual due to the nice property that $\widehat{\mathbb{Z}/N\mathbb{Z}}\cong\mathbb{Z}/N\mathbb{Z}$.
	 \par
	 Suppose $G$ is any Abelian group, $f,g,h$ are complex-valued functions defined on $G$, then we have the following facts between physical space and Fourier space.
	 \begin{itemize}
	 \item $\widehat{f\ast{g}}=\widehat{f}\ \widehat{g}$, $\widehat{f\circ{f}}=|\widehat{f}\,|^{2}$, $\langle{f,g\ast{h}\rangle}=\langle{f\circ{g},h}\rangle$;
	 \item (Fourier inverse formula, finite case)$f(x)=\sum_{\gamma\in\widehat{G}}\widehat{f}(\gamma)\gamma(x)$;
	 \item(Parseval's identity)$\langle{f,g}\rangle=\langle{\widehat{f},\widehat{g}}\rangle$, especially when $g=f$, we have $\|f\|_{L^{2}}=\|\widehat{f}\|_{L^{2}}.$
	 
	 \end{itemize}
	\section{\textsc{Overview of Roth-Type Theorems}}
	In this section we shall give the motivation and history of our topic, focusing on the demonstration of the results and how early researches inspired the newer ones. We will try not to include their methods in this section in case of digression and leave this to section \ref{4}.
\subsection{Early Motivation}
In combinatorics there is an interesting and famous theorem due to van der Waerden:
\begin{thm}[van der Waerden, 1927]\label{3.1}
	$\forall{h,k\in\mathbb{Z^{+}}}, \exists{N(h,k)}$ s.t. $\forall N\geqslant{N(h,k)}, [N]=C_{1}\cup\cdots\cup{C_{h}}, \exists i$ s.t. $C_{i}$ contains a non-trivial $k$\text{-AP}.
\end{thm}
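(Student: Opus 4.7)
The plan is to prove Theorem~\ref{3.1} by a double induction: the outer induction is on the AP-length $k$, and the inner induction is on an auxiliary counter $r$ tracking partial progress. The base cases are immediate: $N(h,1)=1$, and $N(h,2)=h+1$ follows from the pigeonhole principle, since any $h+1$ integers colored with $h$ colors must contain a monochromatic pair.

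For the outer inductive step, assume $N(h',k-1)$ exists for every $h' \in \mathbb{Z^{+}}$; the goal is to produce $N(h,k)$. The central device is the notion of \emph{color-focused} progressions: a family of $(k-1)$-APs $P_1,\ldots,P_r$ with common differences $d_1,\ldots,d_r$ is focused at $z$ if each $P_i = \{z - jd_i : 1 \leq j \leq k-1\}$, and color-focused if every $P_i$ is monochromatic in some color $c_i$ with the $c_i$ pairwise distinct. The technical heart is the following strengthening, proved by inner induction on $r$ for $1 \leq r \leq h$:
\begin{quote}
There exists $M_r = M_r(h,k) \in \mathbb{Z^{+}}$ such that every $h$-coloring of $[M_r]$ contains either a monochromatic $k$-AP or $r$ color-focused $(k-1)$-APs sharing a common focus $z \in [M_r]$.
\end{quote}
The base case $r=1$ is essentially the outer hypothesis $N(h,k-1)$ (with a mild enlargement to keep the focus inside the interval). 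For the step $r \to r+1$, I would partition $[N]$ into consecutive blocks of length $2M_r$: inside each block we either complete a monochromatic $k$-AP and finish, or extract $r$ color-focused $(k-1)$-APs focused at some point in that block. There are only $h^{2M_r}$ possible block-coloring patterns, so for $N$ sufficiently large two blocks $B_s$ and $B_t$ share a pattern; the translation $\Delta$ between them, combined with the shared $r$-tuple of differences, is used to construct an $(r+1)$-th monochromatic $(k-1)$-AP of a new color, sharing a shifted common focal point (essentially $z_s + 2\Delta$) with suitably stretched versions of the original $r$ APs.

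Setting $r = h$ in the inner claim closes the outer induction: the focus $z$ carries some color among the $h$ available, necessarily matching some $c_i$, so $P_i \cup \{z\}$ is the desired monochromatic $k$-AP. The main obstacle is precisely the construction of the $(r+1)$-th color-focused AP in the inner step: one must choose the new focal point and stretched common differences so that every element of the new AP inherits the correct color from one of the two matched blocks and every AP remains inside $[N]$. Careful bookkeeping forces $M_{r+1}$ to be roughly exponential in $M_r$, and consequently $N(h,k)$ grows at an Ackermann-type rate as a function of $h$ and $k$---an unavoidable feature of this elementary argument. Primitive recursive bounds (Shelah) or tower-type density bounds (Gowers's Fourier-analytic approach in the Roth-style density setting) require substantially different machinery and lie outside the scope of this plan.
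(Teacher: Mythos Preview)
The paper does not supply a proof of Theorem~\ref{3.1}; van der Waerden's theorem is quoted there only as historical motivation for the density questions that follow, so there is no ``paper's own proof'' to compare against. Your outline follows the classical color-focusing argument, and the overall architecture---outer induction on $k$, inner induction on the number $r$ of color-focused $(k-1)$-APs, termination at $r=h$ because the focus must share a color with some $P_i$---is correct.

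There is, however, a genuine gap in your inner step $r\to r+1$. Plain pigeonhole on block-colorings yields only \emph{two} blocks $B_s,B_t$ with identical patterns, and from two matched blocks all you can conclude is that the focus $z_s$ and its translate $z_s+\Delta$ carry the same color---a monochromatic $2$-AP, not the required $(k-1)$-AP. (Your suggested new focal point $z_s+2\Delta$ lies in a third block whose coloring you do not control.) The standard repair is to invoke the \emph{outer} hypothesis a second time: regard each block as a single point colored by one of $h^{2M_r}$ block-patterns and apply the already-established bound $N(h^{2M_r},k-1)$ to obtain a full $(k-1)$-AP of identically colored blocks $B_j,B_{j+D},\ldots,B_{j+(k-2)D}$. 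The foci in these blocks then form the new monochromatic $(k-1)$-AP, and the common focus of the enlarged family sits at the next block position $j+(k-1)D$. This is precisely why $M_{r+1}\approx 2M_r\cdot N\bigl(h^{2M_r},k-1\bigr)$ and why the final bound is Ackermann-type; your sketch as written would give only a single exponential at each inner step, which is too optimistic.
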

We can establish an equivalence easily between the theorem above and the following version by compactification.

\begin{thm}\label{3.2}
	$\forall{h\in\mathbb{Z^{+}}}, \mathbb{Z^{+}}$(or $\mathbb{Z}$)$=C_{1}\cup\cdots\cup{}C_{h}, $ we can find a non-trivial AP of arbitrary length in some $C_{i}$.
\end{thm}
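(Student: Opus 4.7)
The plan is to deduce Theorem 3.2 from the finite statement Theorem 3.1 together with a pigeonhole argument on the color index. For each integer $k\geqslant 1$, I restrict the given partition of $\mathbb{Z}^{+}$ (or $\mathbb{Z}$) to the initial segment $[N(h,k)]$, obtaining an $h$-coloring of $[N(h,k)]$. Applying Theorem 3.1 produces an index $i(k)\in\{1,\ldots,h\}$ such that $C_{i(k)}\cap[N(h,k)]$ contains a non-trivial $k$-AP. This defines a function $i:\mathbb{Z}^{+}\to\{1,\ldots,h\}$.

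Since the domain of $i$ is infinite while its codomain has only $h$ elements, the pigeonhole principle guarantees some $i^{*}\in\{1,\ldots,h\}$ with $i(k)=i^{*}$ for infinitely many values of $k$. For that particular class $C_{i^{*}}$ we therefore have a non-trivial $k$-AP inside $C_{i^{*}}$ for arbitrarily large $k$. Since any $k$-AP of common difference $d\neq 0$ contains a non-trivial $k'$-AP of the same common difference for every $k'\leqslant k$ (take the first $k'$ terms), we conclude that $C_{i^{*}}$ contains a non-trivial AP of every length, as claimed. The passage from $\mathbb{Z}^{+}$ to $\mathbb{Z}$ is automatic: any finite partition of $\mathbb{Z}$ restricts to a finite partition of $\mathbb{Z}^{+}$, and the AP found lies in $\mathbb{Z}^{+}\subseteq\mathbb{Z}$.

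The only real content beyond Theorem 3.1 is the insistence that \emph{one single} class $C_{i^{*}}$ simultaneously accommodates APs of every length, rather than allowing the class to depend on $k$. This is the point where the finiteness of the number of classes is indispensable, and it is exactly what the pigeonhole step extracts. I do not expect any genuine obstacle; the argument is a routine compactness-free transfer from the finite to the infinite setting, and all of the combinatorial heavy lifting has already been done inside Theorem 3.1.
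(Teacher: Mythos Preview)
Your argument is correct. The paper does not spell out a proof of Theorem~3.2; it only remarks that the equivalence with Theorem~3.1 follows ``by compactification.'' Your pigeonhole argument is precisely an elementary, topology-free realization of that compactness idea: rather than invoking sequential compactness of the coloring space $\{1,\ldots,h\}^{\mathbb{Z}^{+}}$, you directly extract a single color class $C_{i^{*}}$ via the infinite pigeonhole principle and then use the trivial observation that a long AP contains shorter ones. The two routes are morally identical, with yours being the more hands-on version; nothing is lost or gained in strength, only in packaging.
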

We note that in Theorem \ref{3.2} $\sup\limits_{1\leqslant{i}\leqslant{h}}\overline{d}(C_{i})>0$, thus we naturally ask:\\
\textbf{Question.} If $A\subset\mathbb{Z^{+}}$ satisfies $\overline{d}(A)>0$, is it true that $A$ must contain arbitrarily long AP?\par
If we can prove so, Theorem \ref{3.1} and \ref{3.2} follow immediately. As we said in section \ref{2} a prior step to carry out is to restrict our set on the finite setting $\mathbb{Z}/N\mathbb{Z}$. This is permissible because we allow that the density differs by a positive constant. For instance, if we want to study 3-AP we are dealing with a simple linear equation
\begin{equation}\label{3AP}
	 x+y-2z=0 ,\qquad x,y,z\in A.
\end{equation}
 Then we can fix $A\subset [N]\hookrightarrow\mathbb{Z}/(2N+1)\mathbb{Z}=G $, where the embedding is canonical. It is reasonable since {\color{red}(\ref{3AP})} has a non-trivial solution in $\mathbb{Z}$ iff it has a non-trivial solution in $G=\mathbb{Z}/(2N+1)\mathbb{Z}$.\par
 Following this, we define for every integer $k\geqslant 3$:
 \begin{defi}
 		$\displaystyle{r_{k}(N)=\frac{\sup_{A\subset[N], \text{ A does not contain non-trivial } k\text{-AP}}|A|}{N}}.$
 \end{defi}
Then the question above can be translated into whether $r_{k}(N)=o(1)$ as $N\to\infty$.\par 
In 1936, Erd\H{o}s and Tur\'{a}n also conjectured:
\begin{conj}\label{ET}
	 $\mathbb{P}$ contains arbitrarily long AP, where $\mathbb{P}=\{2,3,5,...\}$ is the set of primes.
\end{conj}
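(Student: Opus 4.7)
The plan is to follow the Green-Tao strategy, which resolved this conjecture in 2004 by combining Szemer\'edi's theorem (stated later in the paper, for sets of positive density) with a transference principle. The difficulty is that $\mathbb{P}$ has density zero in $\mathbb{Z}$, since the prime number theorem gives $|\mathbb{P}\cap[N]|\sim N/\log N$, so Szemer\'edi cannot be applied to $1_{\mathbb{P}}$ directly. The key insight is that $\mathbb{P}$ has \emph{positive relative density} inside a larger yet still \emph{pseudo-random} set of almost primes, and positive density inside a sufficiently pseudo-random host should already force arbitrarily long APs.

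First I would construct, for each large prime $N$ and each target length $k$, a majorant $\nu:\mathbb{Z}/N\mathbb{Z}\to\mathbb{R}_{\geqslant 0}$ with $\mathbb{E}\,\nu=1+o(1)$ and $\nu(n)\gg 1_{\mathbb{P}}(n)\log N$ on a large portion of $[N]$. A natural choice is the normalised truncated Selberg divisor sum of Goldston-Y{\i}ld{\i}r{\i}m type
\[ \nu(n)=c_{k,R}\Bigl(\sum_{d\mid n,\,d\leqslant R}\mu(d)\,\chi\bigl(\tfrac{\log d}{\log R}\bigr)\Bigr)^{2}, \]
with $R=N^{c}$ for some small $c>0$ and a smooth cutoff $\chi$. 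After a $W$-trick (restricting primes to a fixed residue class coprime to all small primes), the primes attain density at least a positive constant $\delta_{k}$ relative to $\nu$.

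Next I would verify that $\nu$ satisfies the two pseudo-randomness hypotheses required by the framework, the \emph{linear forms condition} and the \emph{correlation condition}: averages of $\prod_{i}\nu(\psi_{i}(\mathbf{x}))$ over finite systems of affine linear forms, and of $\prod_{i}\nu(x+h_{i})$, should behave asymptotically as if $\nu$ were identically $1$. These are essentially the Goldston-Y{\i}ld{\i}r{\i}m sieve asymptotics. With these in hand, I would prove a \emph{relative Szemer\'edi theorem}: any $f$ with $0\leqslant f\leqslant\nu$ and $\mathbb{E}\,f\geqslant\delta$ satisfies
\[ \mathop{\mathbb{E}}_{x,d\in\mathbb{Z}/N\mathbb{Z}}f(x)f(x+d)\cdots f(x+(k-1)d)\geqslant c(k,\delta)-o(1). \]
The argument decomposes $f=f_{1}+f_{2}$ in Koopman-von Neumann style against the Gowers $U^{k-1}$ norm: $f_{1}$ is bounded, so ordinary Szemer\'edi handles it, while $f_{2}$ has small $U^{k-1}$ norm, so a generalised von Neumann inequality (which is where the linear forms condition is consumed) makes its contribution negligible. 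Applied to $f\sim 1_{\mathbb{P}}\log N$, this produces non-trivial $k$-APs in $\mathbb{P}$ for every $k\geqslant 3$.

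The \emph{hard part} will be the verification of pseudo-randomness of $\nu$: the linear forms and correlation estimates require delicate singular-series computations and contour shifts for a multi-dimensional Dirichlet series, and historically constitute the most technical step of Green-Tao. (Subsequent work by Conlon-Fox-Zhao has shown that only a quantitative linear forms condition is really needed, considerably simplifying this part.) The relative Szemer\'edi theorem itself, though deep, follows a now-standard template once both pseudo-randomness of $\nu$ and ordinary Szemer\'edi are in hand.
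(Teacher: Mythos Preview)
The paper does not give a proof of this statement at all: it records Conjecture~\ref{ET} only as a historical conjecture of Erd\H{o}s--Tur\'an and then simply \emph{cites} Green--Tao (Theorem~3.9) as having resolved it. The transference-principle discussion in Section~4.2 is devoted to Green's 2005 result on $3$-APs in relatively dense subsets of the primes (Theorem~3.8), not to the full Green--Tao theorem; in particular no Gowers norms, no linear-forms condition, and no relative Szemer\'edi theorem appear anywhere in the paper. So there is no ``paper's own proof'' to compare against.

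That said, your outline is an accurate and well-organised sketch of the Green--Tao argument: the $W$-trick plus a Goldston--Y{\i}ld{\i}r{\i}m majorant, verification of pseudo-randomness (linear forms and correlation conditions, or just the former after Conlon--Fox--Zhao), and a Koopman--von Neumann decomposition feeding into a generalised von Neumann inequality together with ordinary Szemer\'edi. Your identification of the sieve asymptotics as the technically heaviest step is also correct. In short, your proposal is the right proof, but it goes far beyond anything the paper itself attempts; the paper treats this result as a black box.
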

It immediately follows if we can prove $r_{k}(N)=o(1/\log N)$ for every $k\geqslant{3}$ since by the prime number theorem, $1/\log N\sim \pi(N)/N$, where $\pi(N)=|\mathbb{P}\cap[N]|.$

\subsection{Roth's Theorem and the Relative Version in the Primes }
The first result of $r_{k}(N)$ is due to Roth\cite{R53} for $k=3$.
\begin{thm}[Roth, 1953]\label{3.5}
		$\displaystyle{		r_{3}(N)\ll\frac{1}{\log\log N}},$ thus $r_{3}(N)=o(1).$ 
\end{thm}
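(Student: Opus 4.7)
The plan is to follow Roth's original Fourier-analytic density increment argument. First I would embed $A \subset [N]$ into the cyclic group $G = \mathbb{Z}/N'\mathbb{Z}$ for $N' \asymp N$, so that nontrivial 3-APs in $A$ correspond to nontrivial solutions of $x+y-2z=0$ in $G$. Setting $\alpha = |A|/N$ and using the Fourier inversion formula on $G$, one can express the (normalized) count of 3-APs in $A$ as
\begin{equation*}
	T_3(A) \;=\; \sum_{\gamma \in \widehat{G}} \widehat{1_A}(\gamma)^2 \,\overline{\widehat{1_A}(2\gamma)}.
\end{equation*}
Pulling out the trivial character gives a main term $\alpha^3$, and the remaining terms are bounded by $\sup_{\gamma \neq 0}|\widehat{1_A}(\gamma)| \cdot \|\widehat{1_A}\|_2^2 = \sup_{\gamma \neq 0}|\widehat{1_A}(\gamma)|\cdot \alpha$ via Parseval.

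Next I would set up the key dichotomy. Since $A$ contains no nontrivial 3-AP, $T_3(A)$ equals the trivial contribution $\alpha/N$. Assuming $\alpha \gg 1/N$, this forces
\begin{equation*}
	\sup_{\gamma \neq 0}\,|\widehat{1_A}(\gamma)| \;\gg\; \alpha^2.
\end{equation*}
In other words, a 3-AP-free set must exhibit nontrivial linear bias. I would then convert this $L^\infty$ Fourier bound into a physical-space density increment: if $|\widehat{1_A}(\gamma)| \geq c\alpha^2$, then by a standard Dirichlet-type approximation I partition $G$ into arithmetic progressions $P_1,\dots,P_m$ of common difference $d$ and length $\ell \asymp \sqrt{N}$ on each of which the character $\xi \mapsto e(\gamma \xi)$ varies by less than, say, $c\alpha^2/10$. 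On such progressions the Fourier bias translates, via the triangle inequality applied to $\sum_\xi 1_A(\xi) e(-\gamma \xi)$, into the existence of some $P_i$ on which $A$ has density at least $\alpha + c'\alpha^2$.

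Finally I iterate. Rescaling $P_i$ back to an interval $[N_1]$ with $N_1 \asymp \sqrt{N}$, I obtain a new 3-AP-free set of density $\alpha_1 \geq \alpha + c'\alpha^2$ inside $[N_1]$; repeat. The densities at least double after $O(1/\alpha)$ steps, so after $k \asymp 1/\alpha$ such doublings we would need $\alpha \cdot 2^k \leq 1$, contradicting $\alpha > 0$ unless the process terminates because the underlying interval becomes too short. Tracking the length one sees $N \mapsto c\sqrt{N}$ at each step, so the process survives $k$ steps only when $\log \log N \gg k \gg 1/\alpha$, yielding $\alpha \ll 1/\log\log N$.

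The main obstacle I anticipate is the density increment step: carefully carrying out the partition of $G$ into sub-progressions on which the relevant character is nearly constant, and then combining Plancherel/triangle-inequality bookkeeping with the bias inequality to extract a genuine gain of order $\alpha^2$ (not merely $\alpha^3$) in the density. Everything else—the Fourier counting formula, the dichotomy, and the iterative bookkeeping—is more or less mechanical once the increment lemma is in place; the efficiency of the argument, and hence the $\log\log$ shape of the bound, hinges entirely on how long a sub-AP one can produce while still gaining $\alpha^2$ in density.
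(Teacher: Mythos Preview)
Your proposal is correct and is the standard density-increment exposition of Roth's argument. The paper, however, does not give a detailed proof of Theorem~\ref{3.5}; it only sketches the approach in Section~4.1.2, and there it frames Roth's proof as a Hardy--Littlewood circle-method estimate of the integral $\int_0^1 S(\theta)^2 S(-2\theta)\,\mathrm{d}\theta$ with the simplest major arc $\mathfrak{M}=[\eta,1-\eta]$, rather than as an explicit iteration on sub-progressions. The paper in fact reserves the ``density increment'' label for later developments (Heath-Brown's large sieve in Section~4.1.2, and Bohr-set increments in Section~4.1.3). In substance the two viewpoints agree at the decisive step---the absence of nontrivial 3-APs forces a large nontrivial value of $S(\theta)$, equivalently a large nontrivial Fourier coefficient, which you then convert into a density gain on a sub-AP of length $\asymp\sqrt{N}$---but the emphases differ: the paper's circle-method framing is closer to Roth's 1954 treatment of general linear systems (Theorem~\ref{3.6}), where the major/minor arc decomposition generalizes cleanly to higher-dimensional frequency space, whereas your iterative formulation on sub-APs is the template that the modern improvements surveyed in Section~4.1.3 (Bourgain, Bloom--Sisask, Kelley--Meka) refine by replacing sub-APs with Bohr sets.
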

Soon he generalized this result to the multidimensional version, but with certain conditions for the matrix of coefficients\cite{R54}:
\begin{thm}\label{3.6}
		Let $\mathbf{C}=(c_{\mu\nu})$ be a $l\times n$ matrix with integer entries. Suppose $A\subset\mathbb{Z^{+}}$ s.t. there is no solution $\mathbf{x}=(x_{1},...,x_{n})^{T}$ with all $x_{i}$ distinct to $\mathbf{Cx}=0$, then $\overline{d}(A)=0$, if $\mathbf{C}$ satisfies the following two conditions:\\
	(a)(Translation Invariance)$\sum\limits_{\nu=1}^{n}c_{\mu\nu}=0$;\\
	(b)(Large Rank)There exists $l$ linearly independent columns of $\mathbf{C}$ s.t. any of them excluded, the remaining $n-1$ columns can be devided into two groups among each there are $l$ linearly independent columns.
\end{thm}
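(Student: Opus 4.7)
The plan is to mimic Roth's density-increment argument used for Theorem \ref{3.5}, but adapted to the system $\mathbf{Cx}=0$ via Fourier analysis on $\mathbb{Z}/N\mathbb{Z}$. First I would pass to the finite cyclic setting as described after \eqref{3AP}: fix $A\subset[N]$ with $|A|=\alpha N$ and embed into $G=\mathbb{Z}/N'\mathbb{Z}$ for a prime $N'$ of size $\asymp N$ large enough that all solutions to $\mathbf{Cx}=0$ that live in $[N]^{n}$ are unaffected by wrap-around (this is where the integer entries of $\mathbf{C}$ matter). The goal is to show that if $\alpha>0$ then, for $N$ large enough depending on $\alpha$ and $\mathbf{C}$, the number of solutions $\mathbf{x}\in A^n$ to $\mathbf{Cx}=0$ with all coordinates distinct is positive.

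Next I would write the counting function Fourier-analytically. Using the orthogonality relation
\[
\mathbf{1}_{\mathbf{Cx}=0}(\mathbf{x})=\frac{1}{|G|^{l}}\sum_{\mathbf{y}\in G^{l}}e\!\left(\tfrac{\mathbf{y}^{T}\mathbf{C}\mathbf{x}}{N'}\right),
\]
the number of solutions in $A^{n}$ equals $|G|^{n-l}\sum_{\mathbf{y}\in G^{l}}\prod_{\nu=1}^{n}\widehat{1_{A}}\bigl((\mathbf{C}^{T}\mathbf{y})_{\nu}\bigr)$. The main term comes from $\mathbf{y}=0$ and is $\alpha^{n}|G|^{n-l}$; here condition (a) (translation invariance) is what guarantees that constant tuples $(a,a,\dots,a)$ solve $\mathbf{Cx}=0$, so the $\mathbf{y}=0$ contribution is genuinely of order $\alpha^{n}$ rather than degenerating, and it also implies that trivial solutions contribute only $O(\alpha|G|)$, which is negligible against the main term once $\alpha^{n-1}|G|^{n-l-1}$ is large. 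Thus if no nontrivial solutions exist, the error term $E:=\sum_{\mathbf{y}\neq 0}\prod_{\nu}\widehat{1_{A}}\bigl((\mathbf{C}^{T}\mathbf{y})_{\nu}\bigr)$ must be comparable in magnitude to $\alpha^{n}$.

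The crucial and hardest step is extracting a large Fourier coefficient from $|E|$, and this is exactly where condition (b) enters. For each nonzero $\mathbf{y}$, pick the distinguished column guaranteed by (b); estimate the corresponding factor $|\widehat{1_{A}}((\mathbf{C}^{T}\mathbf{y})_{\text{dist}})|$ by its supremum $\|\widehat{1_{A}}\|_{\infty}$ outside the main term, and split the remaining $n-1$ factors into the two groups of $l$ linearly independent columns promised by (b). For each such group, linear independence means that as $\mathbf{y}$ ranges over $G^{l}$ the $l$-tuple $\bigl((\mathbf{C}^{T}\mathbf{y})_{\nu}\bigr)_{\nu\in\text{group}}$ ranges bijectively over $G^{l}$, so Cauchy--Schwarz applied to the two groups collapses the products down to Parseval sums $\sum_{\xi}|\widehat{1_{A}}(\xi)|^{2}=\alpha$. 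This yields a bound of the shape $|E|\ll \|\widehat{1_{A}}\|_{\infty}\cdot \alpha^{(n-1)/2}\cdot\alpha^{(n-1)/2}$ (up to factors from leftover columns), forcing some nonzero $\xi$ with $|\widehat{1_{A}}(\xi)|\gg\alpha^{C}$ for a constant $C=C(n,l)$.

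Finally I would convert this large Fourier coefficient into a density increment in the usual Roth manner: the phase $e(\xi x/N')$ is nearly constant on an arithmetic progression $P\subset[N]$ of length $\sim N^{c}$, and on some translate of $P$ the density of $A$ exceeds $\alpha+c'\alpha^{C+1}$. Iterating this increment $O(\alpha^{-C})$ many times while the length of the nested progression stays large produces a contradiction once $N$ is sufficiently large in terms of $\alpha$ and $\mathbf{C}$, giving $\overline{d}(A)=0$. The main obstacle is the combinatorial/linear-algebra bookkeeping in the previous paragraph: verifying that the splitting provided by (b) makes every Cauchy--Schwarz application land on a complete and bijective sum, and that the single distinguished column can be chosen uniformly in $\mathbf{y}$ so that the supremum bound really applies to one factor for every $\mathbf{y}\neq 0$; conditions (a) and (b) are both used in an essential way here and removing either breaks the argument.
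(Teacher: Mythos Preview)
The paper does not give a full proof of Theorem~\ref{3.6}; it cites Roth's 1954 paper and describes the method only briefly. In Section~4.1.2 it says that Roth's circle-method proof of Theorem~\ref{3.5} ``easily extends to Theorem~\ref{3.6}, where he chose the major arc to be away from the $l$ dimensional hyperplanes defined by the column vectors in the coefficient matrix,'' and in Section~5.1 it says Roth ``just transformed (\ref{(10)}) into the Fourier space and estimated the integral of exponential sums as usual.'' So the approach the paper attributes to Roth is the Hardy--Littlewood circle method on $\mathbb{T}^{l}$: write the solution count as $\int_{\mathbb{T}^{l}}\prod_{\nu=1}^{n} S(\mathbf{c}_{\nu}\cdot\theta)\,d\theta$, split into a major arc (a neighbourhood of the hyperplanes determined by the columns of $\mathbf{C}$) and a minor arc, and bound the minor-arc integral using the column structure from condition~(b).

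Your proposal takes a genuinely different route: a discrete density-increment on $\mathbb{Z}/N'\mathbb{Z}$ in the Meshulam style. The two arguments meet at the same linear-algebraic core --- condition~(b) provides two blocks of $l$ independent columns so that Cauchy--Schwarz plus Parseval collapses the off-main Fourier sum --- but the packaging is different. Roth bounds a continuous minor-arc integral directly; you instead extract a single large nonzero Fourier coefficient and iterate on subprogressions. Your formulation is cleaner to state and plugs directly into the density-increment framework the paper develops in Section~4.1.3; Roth's original keeps the $l$-dimensional major-arc geometry explicit, which is what the paper chose to emphasize.

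Two small corrections to your sketch. First, the essential role of condition~(a) is not in the $\mathbf{y}=0$ main term (that term is $\alpha^{n}$ regardless); it is in the iteration step: translation invariance is exactly what guarantees that after restricting $A$ to a subprogression and affinely rescaling back to $[M]$, the system $\mathbf{Cx}=0$ is unchanged, so the increment can be repeated. Second, you cannot choose a single distinguished column uniformly in $\mathbf{y}\neq 0$. Condition~(b) gives $l$ special columns, and because they are linearly independent, for each $\mathbf{y}\neq 0$ at least one of them satisfies $(\mathbf{C}^{T}\mathbf{y})_{\nu}\neq 0$; partition $\{\mathbf{y}\neq 0\}$ into at most $l$ pieces according to the first such $\nu$ and run your Cauchy--Schwarz bound on each piece separately. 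You correctly flagged this as the main bookkeeping obstacle, and this partition is the standard fix.
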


Theorem \ref{3.5} immediately follows from Theorem \ref{3.6} if we take $\mathbf{C}=(1\ -2\ 1).$\\
\textbf{Remark.} (1) Condition (a) is necessary when we consider more general ambient groups, while condition (b) is not. For instance, if we consider $A\subset\mathbb{F}_{2}^{n}$, by taking elements in $A$ to be those with the first coordinate equal to 1 we know $A$ contains no non-trivial solution to the single linear equation $x+y-z=0$ however it has positive density $d(A)=1/2$;\\
(2) From condition (b) we know that $n>2l$, which unfortunately excludes the 4-AP case($\mathbf{C}=\begin{pmatrix}
	1 & -2 & 1 & 0\\
	0 & 1 & -2 & 1
\end{pmatrix})$ and longer cases.
\par 
In 1946, Behrend\cite{B46} constructed a dense set free of 3-AP, providing a quasi-polynomial type lower bound for $r_{3}(N).$
\begin{thm}\label{qp}
		$r_{3}(N)\geqslant\exp(-C(\log N)^{1/2})$ for some absolute constant $C>0$. 
	
\end{thm}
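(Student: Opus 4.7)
The plan is to exhibit, for each $N$, an explicit 3-AP free subset $A \subset [N]$ of size at least $N\exp(-C\sqrt{\log N})$, following Behrend's original construction based on spheres in high-dimensional lattice cubes.

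First I would work in the discrete cube $[0,M-1]^{d} \subset \mathbb{Z}^{d}$ and consider the level sets $S_{r} = \{\mathbf{x} \in [0,M-1]^{d} : x_{1}^{2} + \cdots + x_{d}^{2} = r^{2}\}$. The key geometric observation is that any sphere in $\mathbb{R}^{d}$ is strictly convex, so if $\mathbf{x}, \mathbf{y}, \mathbf{z} \in S_{r}$ satisfy $\mathbf{x} + \mathbf{z} = 2\mathbf{y}$ then $\mathbf{x} = \mathbf{y} = \mathbf{z}$; consequently each $S_{r}$ is free of non-trivial 3-APs in $\mathbb{Z}^{d}$. Since $r^{2}$ ranges over integers in $[0, d(M-1)^{2}]$ and the cube has $M^{d}$ lattice points, pigeonhole gives some $r$ with $|S_{r}| \gg M^{d-2}/d$.

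Next I would transfer this high-dimensional 3-AP free set into $\mathbb{Z}$ via the base-$2M$ digit map $\varphi(\mathbf{x}) = \sum_{i=1}^{d} x_{i}(2M)^{i-1}$. This map is injective on $[0,M-1]^{d}$, and because the coordinates satisfy $x_{i} + z_{i} \leqslant 2M-2 < 2M$, no carries occur when adding two elements of $\varphi(S_{r})$ coordinate-wise in base $2M$. Therefore a 3-AP in $\varphi(S_{r})$ pulls back to a 3-AP in $S_{r}$, so $A := \varphi(S_{r})$ is a 3-AP free subset of $[0, N]$ with $N := (2M)^{d}$.

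Finally I would optimise the free parameters $d$ and $M$. We have $|A| \gg M^{d-2}/d$ and $N = (2M)^{d}$, so
\[
\frac{|A|}{N} \;\gg\; \frac{1}{d\cdot 2^{d} M^{2}}.
\]
Taking logarithms and writing $\log M = (\log N)/d - \log 2$, the exponent becomes $-d\log 2 - 2(\log N)/d - O(\log d)$. Choosing $d \asymp \sqrt{\log N}$ balances the two main terms and yields $|A|/N \geqslant \exp(-C\sqrt{\log N})$, which is exactly the claimed bound on $r_{3}(N)$.

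The main obstacle is conceptual rather than computational: one must spot that strict convexity of a sphere forces midpoint rigidity, which is what makes the construction work in any dimension, and then realise that the correct way to trade dimension for density is through the base-$2M$ encoding with $d \asymp \sqrt{\log N}$. Once these two ideas are in hand, the remaining estimates are routine pigeonhole and elementary calculus.
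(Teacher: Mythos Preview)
Your proposal is correct and follows essentially the same approach as the paper: both present Behrend's construction, taking a sphere in a high-dimensional lattice cube (which is 3-AP free by strict convexity), selecting a dense sphere by pigeonhole, and encoding it into $[N]$ via a carry-free digit map, then optimising the dimension to be of order $\sqrt{\log N}$. The only cosmetic difference is that the paper encodes in base $2d-1$ whereas you encode in base $2M$; these are interchangeable variants of the same idea.
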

He showed it based on the idea that no three distinct points on a high dimensional sphere w.r.t. some expansion of integer into powers with controlled digits, lie on the same line.
Consider \[A_{k}=\{\sum\limits_{i=1}^{n} a_{i}(2d-1)^{i}:0\leqslant{i}<d,\sum\limits_{i=1}^{n}{a_{i}^{2}}=k\}.\]
We can deduce that $A_{k}$ is large for some $0\leqslant k\leqslant{n(d-1)^{2}}$ by the pigeonhole principle. 
It is also conjectured that lower bound in Theorem \ref{qp} is the best upper bound for $r_{3}(N).$
\par 
Here we give a table that shows the improvements on the upper bound for $r_{3}(N)$ since Roth's first theorem.
	\renewcommand\arraystretch{1.5}
\begin{table}[H]
	
	\centering
	\begin{threeparttable}
		\begin{tabular}{|c|c|c|}
			\hline
			Roth & 1953 & $1/\log\log N$ \\\hline
			Szemer\'{e}di & 1986 & $\exp(-O(\log\log N)^{1/2})$\\\hline
			Heath-Brown & 1987 & $1/(\log{N})^{c}$ for some tiny $c
			>0$ \\\hline
			Szemer\'{e}di & 1990 & $1/(\log N)^{1/4-o(1)}$ \\\hline
			Bourgain & 1999 & $1/(\log N)^{1/2-o(1)}$ \\\hline
			Sanders & 2011 & $(\log\log N)^{6}/\log N$ \\\hline
			Bloom & 2014 & $(\log\log N)^{4}/\log N$ \\\hline
			Bloom-Sisask & 2021 & {\color{red}$1/(\log N)^{1+c}$ for some tiny c>0}\\\hline
			Kelley-Meka & 2023 &	$\exp(-O(\log N)^{1/12})$\\\hline
			Bloom-Sisask & 2023 & $\exp(-O(\log N)^{1/9})$\\\hline
		\end{tabular}
		\begin{tablenotes}
			\footnotesize
			\item Tab. Improvements on the upper bound for $r_{3}(N)$
		\end{tablenotes}
	\end{threeparttable}
	
\end{table}
	Here we remark that the result of Bloom-Sisask in 2021\cite{BS21} was quite remarkable as we noted that the relative version in primes is implied by the bound better than $1/\log N$. Actually back in 2005 Green\cite{G05} proved the following:
	\begin{thm}[Green, 2005]\label{3.8}
			Suppose $A\subset\mathbb{P}$ s.t. $\overline{d}_{\mathbb{P}}(A)>0,$ then $A$ must contain a non-trivial 3AP.
	\end{thm}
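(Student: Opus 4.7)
The plan is to adapt Roth's theorem to the sparse setting of the primes by a transference principle, since the naive obstacle is that $\mathbb{P}\cap[N]$ has density only $\sim 1/\log N$, which is below the threshold at which the Roth-type bounds available in 2005 (such as Bourgain's $r_{3}(N)\ll 1/(\log N)^{1/2-o(1)}$) can be applied directly to $A$. The idea is to replace the uniform measure on $[N]$ by a suitable pseudorandom measure $\nu$ that majorises a renormalised indicator of the primes, and to show that, relative to $\nu$, the standard Fourier-analytic Roth argument still produces $3$-APs.

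First I would perform the $W$-trick: let $w=w(N)\to\infty$ slowly, $W=\prod_{p\leq w}p$, and by the prime number theorem in arithmetic progressions together with pigeonhole, pass to a residue class $b\pmod W$ with $(b,W)=1$ on which $A$ still has positive relative density. Rescaling $n\mapsto Wn+b$ produces a set $A'\subset[N']$ with $N'\asymp N/W$, together with a weight $\lambda_{b,W}$ (essentially $\tfrac{\phi(W)}{W}\log(Wn+b)$ when $Wn+b$ is prime, and $0$ otherwise) of mean $1+o(1)$, with $A'$ having positive density relative to $\lambda_{b,W}$. The $W$-trick removes the bias from small primes, so that for additive purposes $\lambda_{b,W}$ resembles a random set of density $1$.

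Next I would majorise $\lambda_{b,W}$ by a Goldston--Y{\i}ld{\i}r{\i}m type pseudorandom measure
\[
\nu(n)\;=\;\frac{\phi(W)}{W}\,\log R\cdot\Bigl(\sum_{d\mid Wn+b,\,d\leq R}\mu(d)\,\chi\bigl(\tfrac{\log d}{\log R}\bigr)\Bigr)^{\!2},
\]
with $R$ a small power of $N$ and $\chi$ a smooth cutoff. Contour-integral estimates then give $\mathbb{E}\nu=1+o(1)$ and, crucially, the Fourier pseudorandomness $\|\widehat{\nu-1}\|_{\infty}=o(1)$ needed for $3$-APs. I would then invoke a transference (dense model) lemma: given $0\leq f\leq \nu$ on $\mathbb{Z}/N'\mathbb{Z}$ with $\mathbb{E}f\geq \delta$, there is a decomposition $f=f_{1}+f_{2}$ with $0\leq f_{1}\leq 1+o(1)$, $\mathbb{E}f_{1}\geq \delta-o(1)$, and $\|\widehat{f_{2}}\|_{\infty}=o(1)$.

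Finally, expanding $T(f,f,f):=\mathbb{E}_{x,d}f(x)f(x+d)f(x+2d)$ multilinearly, each summand containing at least one copy of $f_{2}$ is controlled, via Fourier inversion and the $\nu$-majorisation of $f_{2}$, by $\|\widehat{f_{2}}\|_{\infty}$ times an $L^{2}$-quantity handled by the pseudorandomness of $\nu$, and is therefore $o(1)$. The main term $T(f_{1},f_{1},f_{1})$ is the $3$-AP count for a bounded function of density $\geq \delta-o(1)$ on $\mathbb{Z}/N'\mathbb{Z}$, so Theorem \ref{3.5} applied to $f_{1}$ yields $T(f_{1},f_{1},f_{1})\gg_{\delta}1$, whence $\gg_{\delta}N'^{2}$ total $3$-APs in $A'$; subtracting the trivial ones ($d=0$, contributing $O(N')$) leaves a genuine non-trivial $3$-AP in $A$. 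The main obstacle is verifying the pseudorandomness of $\nu$: reducing the correlations $\mathbb{E}_{n}\nu(n+h_{1})\nu(n+h_{2})\nu(n+h_{3})$ to a product of local factors requires careful asymptotics for smoothly cut-off divisor sums of Goldston--Y{\i}ld{\i}r{\i}m type, and it is this analytic-number-theoretic input that carries the weight of the proof, with the combinatorial pieces (the $W$-trick, the dense model decomposition, and the inner Roth step) slotting together cleanly once $\nu$ is in hand.
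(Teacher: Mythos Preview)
Your transference argument is sound, but it differs from both proofs the paper presents. The paper first deduces Theorem~\ref{3.8} as an immediate corollary of the Bloom--Sisask bound $r_{3}(N)\ll 1/(\log N)^{1+c}$: a subset of $\mathbb{P}$ with positive relative upper density has density $\gg 1/\log N$ in $[N]$ along a subsequence, which eventually exceeds $r_{3}(N)$. That is a one-line proof unavailable to Green in 2005, of course.

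The paper then sketches Green's original transference argument in Section~4.2, and here your outline diverges in a meaningful way. Green did \emph{not} use a Goldston--Y{\i}ld{\i}r{\i}m sieve majorant or verify multilinear correlation conditions of the form $\mathbb{E}_{n}\nu(n+h_{1})\nu(n+h_{2})\nu(n+h_{3})$; that machinery belongs to the later Green--Tao paper on arbitrary $k$-APs. For $3$-APs Green worked with a much simpler majorant (essentially $\tfrac{\phi(W)}{W}\log p$ on $W$-tricked primes), and the analytic input was not a linear-forms condition but rather (i) a Fourier-decay bound $\|\widehat{\nu}-\widehat{1_{[N]}}\|_{\infty}\leqslant\theta N$, and (ii) a \emph{restriction estimate} at some exponent $p\in(2,3)$, i.e.\ the Hardy--Littlewood majorant property (Theorem~\ref{4.14}). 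The dense model $g$ was then built explicitly as $g=f\ast\beta\ast\beta$ with $\beta=\mu_{B}$ for a Bohr set $B$ whose frequency set is the large spectrum of $f$. Your decomposition $f=f_{1}+f_{2}$ and control of the error terms via $\|\widehat{f_{2}}\|_{\infty}$ times an $L^{2}$ quantity is morally the same endgame, but the way you propose to bound that $L^{2}$ quantity (via correlation estimates for $\nu$) is heavier than what Green used: the restriction estimate gives $\int|\widehat{f}|^{p}\ll_{p}N^{p-1}$ directly, and this is what makes the $3$-AP case cleaner than the general $k$-AP case. Your route would work, but it imports more sieve theory than is needed and misses the point the paper is making about restriction.
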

	And in terms of primes here we also mention the most celebrated Green-Tao theorem\cite{GT08}: 
	\begin{thm}[Green-Tao, 2008]\label{3.9}
			$\mathbb{P}$ contains arbitrarily long AP.
	\end{thm}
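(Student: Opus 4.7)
The plan is to deduce Theorem \ref{3.9} from Szemer\'edi's theorem (the $k$-AP generalization of Theorem \ref{3.5}) via a \emph{transference principle}. The direct route is blocked because the primes have density $1/\log N \to 0$, so Szemer\'edi's theorem cannot be applied to $\mathbb{P}$ itself. The key idea is that although $\mathbb{P}$ is sparse in $\mathbb{Z}$, it is \emph{relatively dense} inside a carefully chosen pseudorandom superset, and Szemer\'edi's theorem admits a relative version that only needs positive density inside such a superset.

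First, I would perform the \emph{$W$-trick} to kill bias from small primes: let $w$ be a slowly growing parameter, set $W=\prod_{p\leqslant w}p$, fix $b$ with $\gcd(b,W)=1$, and work with $\widetilde{\mathbb{P}}=\{n\in [N]:Wn+b\in\mathbb{P}\}$. After this rescaling, $\widetilde{\mathbb{P}}$ has density $\asymp 1/\log N$ but is equidistributed in a much stronger sense. Next, following Goldston--Yıldırım, construct a majorant $\nu:\mathbb{Z}/N'\mathbb{Z}\to\mathbb{R}_{\geqslant 0}$ built from squares of truncated divisor sums so that $\nu$ pointwise dominates a constant multiple of the normalized indicator of $\widetilde{\mathbb{P}}$ while satisfying two crucial pseudorandomness conditions: the \emph{linear forms condition} (the correlations $\mathbb{E}\prod_i \nu(\psi_i(\mathbf{x}))$ over any reasonable system of affine-linear forms $\psi_i$ agree with those of the constant function $1$ up to $o(1)$) and the weaker \emph{correlation condition}.

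Second, prove the \emph{relative Szemer\'edi theorem}: whenever $\nu$ satisfies the above conditions and $0\leqslant f\leqslant\nu$ has mean at least $\delta>0$, one has
\[
\mathbb{E}_{x,d}\,f(x)f(x+d)\cdots f(x+(k-1)d)\geqslant c(k,\delta)-o(1).
\]
The proof uses a Koopman--von Neumann decomposition $f=f_{\mathrm{str}}+f_{\mathrm{unif}}$ where $f_{\mathrm{str}}$ is bounded by $1+o(1)$ and $f_{\mathrm{unif}}$ has small Gowers $U^{k-1}$-norm relative to $\nu$. A generalized von Neumann theorem (proved by repeated Cauchy--Schwarz, exploiting the linear forms condition) shows that the $U^{k-1}$-uniform part contributes negligibly to the $k$-AP count, while the structured part is handled by applying ordinary Szemer\'edi's theorem. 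Applying this to $f=c\cdot 1_{\widetilde{\mathbb{P}}}$ yields many nontrivial $k$-APs in $\widetilde{\mathbb{P}}$, which pull back through the map $n\mapsto Wn+b$ to genuine $k$-APs in $\mathbb{P}$.

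The main obstacle is the verification of the linear forms and correlation conditions for the Goldston--Yıldırım majorant. This is essentially an analytic number theory calculation: one must obtain precise asymptotics, with error terms uniform enough to survive the $W$-trick, for sums of products of truncated divisor sums evaluated on the simultaneous solution set of a system of linear forms. The combinatorial input (the relative Szemer\'edi theorem) is conceptually clean once the pseudorandomness is granted, but establishing that pseudorandomness for a measure that actually dominates the primes is the heart of the argument.
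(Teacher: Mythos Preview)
The paper does not contain a proof of Theorem \ref{3.9}; it merely states the Green--Tao theorem as a known result with a citation to \cite{GT08} and moves on. There is therefore nothing in the paper to compare your proposal against. Your outline is an accurate high-level sketch of the original Green--Tao argument (W-trick, Goldston--Y{\i}ld{\i}r{\i}m majorant, linear forms and correlation conditions, generalized von Neumann via Cauchy--Schwarz, Koopman--von Neumann decomposition, and transference to ordinary Szemer\'edi), so as a summary of the literature it is fine, but it is not something the paper itself undertakes.
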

	This answers Conjecture \ref{ET} in the affirmative. Based on the fact that the sum of reciprocals of the primes diverge, in the 1970s Erd\H{o}s gave another generalized conjecture:
	\begin{conj}\label{SR}
		If $A\subset\mathbb{Z^{+}}$ satisfes $\sum\limits_{n\in A}n^{-1}=+\infty$, then $A$ contains arbitrarily long AP.
	\end{conj}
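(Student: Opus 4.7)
The plan is to reduce Conjecture \ref{SR} to a sufficiently strong quantitative form of Szemer\'edi's theorem. Specifically, I would try to show that for each fixed $k\geqslant 3$ one has $r_{k}(N)\ll 1/(\log N)^{1+c_{k}}$ for some $c_{k}>0$; granted this for every $k$, the conjecture follows from a short dyadic summation argument. In this way the proof of the conjecture is reduced to a purely quantitative question about $r_{k}(N)$, and the entire difficulty is concentrated there.

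\textbf{The dyadic reduction.} Fix $k\geqslant 3$ and suppose, towards a contradiction, that $A\subset\mathbb{Z}^{+}$ contains no non-trivial $k$-AP while $\sum_{n\in A}n^{-1}=+\infty$. Decompose $A$ dyadically as $A=\bigsqcup_{j\geqslant 0}A_{j}$ with $A_{j}:=A\cap(2^{j},2^{j+1}]$. After translation, $A_{j}-2^{j}$ is a $k$-AP-free subset of $[2^{j}]$, so by definition of $r_{k}$ we have $|A_{j}|\leqslant 2^{j}\,r_{k}(2^{j})$. Hence
\[
\sum_{n\in A}\frac{1}{n}\;\leqslant\;\sum_{j\geqslant 0}\frac{|A_{j}|}{2^{j}}\;\leqslant\;\sum_{j\geqslant 0}r_{k}(2^{j}).
\]
A bound of the form $r_{k}(N)\ll(\log N)^{-1-c_{k}}$ yields $r_{k}(2^{j})\ll j^{-1-c_{k}}$, so the right-hand sum converges, a contradiction. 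Thus Conjecture \ref{SR} follows from such a bound on $r_{k}$ for every $k$; in fact any $r_{k}$ with $\sum_{j}r_{k}(2^{j})<\infty$ would already suffice, which in particular is slightly weaker than $r_{k}(N)=o(1/\log N)$.

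\textbf{Status and main obstacle.} For $k=3$ the required estimate is precisely the Bloom-Sisask bound $r_{3}(N)\ll 1/(\log N)^{1+c}$ listed in the table of Section 3, so the three-term case of the conjecture is in fact now a theorem, as an immediate corollary of their 2020--21 breakthrough \cite{BS21}. For $k\geqslant 4$, however, the best known upper bounds remain far from $1/\log N$: the Gowers-type proofs of Szemer\'edi's theorem yield roughly $r_{k}(N)\ll(\log\log N)^{-c_{k}}$, and even the most recent quasi-polynomial improvements have not yet crossed the $\log N$ barrier. The main obstacle is therefore genuinely quantitative: one would need a Kelley--Meka or Bloom--Sisask style argument --- combining higher-order Fourier analysis, the inverse theory of the Gowers $U^{k-1}$ norms, and a density increment driven by structured configurations --- pushing $r_{k}(N)$ past $1/\log N$ for every $k\geqslant 4$. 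Even the case $k=4$ is wide open and would already constitute a major advance, so the conjecture in its full generality is, at present, out of reach with current technology, and I can only propose the reduction above rather than a complete proof.
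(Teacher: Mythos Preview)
Your proposal is correct and aligns with the paper's treatment: the paper also proves only the $k=3$ case of Conjecture~\ref{SR}, deducing it from the Bloom--Sisask bound $r_{3}(N)\ll 1/(\log N)^{1+c}$, and your assessment that the cases $k\geqslant 4$ remain out of reach is exactly the state of affairs. The only difference is in the elementary reduction step. You use a dyadic decomposition, bounding $\sum_{n\in A}1/n\leqslant\sum_{j\geqslant 0}r_{k}(2^{j})$ and then summing $\sum_{j}j^{-1-c_{k}}$. The paper instead writes the reciprocal sum as a Stieltjes integral $\int_{1-\epsilon}^{\infty}\mathrm{d}F(t)/t$ with $F(t)=|A\cap[1,t]|$, integrates by parts, and bounds the result by $\int \mathrm{d}t/(t(\log t)^{1+c})$. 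These are two standard, interchangeable devices for passing from a counting estimate $|A\cap[N]|\ll N/(\log N)^{1+c}$ to convergence of $\sum_{n\in A}1/n$; neither buys anything the other does not, and your version is arguably the cleaner of the two.
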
 
	In fact, both Theorem \ref{3.8} and the 3-AP version of Conjecture \ref{SR} are implied by the Bloom-Sisask bound $1/(\log N)^{1+c}$.
	\begin{proof}
		Suppose that $A$ does not contain contain non-trivial 3-AP, then $\overline{d}(A)\ll \dfrac{1}{(\log N)^{1+c}}$, thus $F(t):=|n\in{A}:n\leqslant{t}|\ll \dfrac{t}{(\log t)^{1+c}}$, so we have
		\[
		\sum\limits_{n\in A}\frac{1}{n}=\int_{1-\epsilon}^{\infty}\frac{\text{d}F(t)}{t}\leqslant 1+\int_{1-\epsilon}^{\infty}\frac{F(t)}{t^{2}}\text{d}t
\ll \int_{1-\epsilon}^{\infty}\frac{\text{d}t}{t(\log t)^{1+c} }   
	 \text{ converges.}\qedhere
		\]
	\end{proof}
\subsection{Results for $r_{k}(N)$ with $k\geqslant 4$}
We list the results for $r_{k}(N)$ with $k\geqslant 4$ in the chronological order below.
\begin{itemize}
	\item In 1975, Szemer\'{e}di\cite{S75} proved that $ r_{k}(N)=o(1)$ for every $k\geqslant 4$ using elementary combinatorial methods;
	\item In 1977, Furstenberg\cite{F77} proved the same result using ergodic methods based on the correspondence principle between combinatorial statements and the multiple reccurence theorem;
	\item In 2001 Gowers\cite{G01} introduced the higher order Fourier analysis method and gave the first quantative bound $r_{k}(N)\ll 1/(\log\log N)^{c_{k}}$ for some tiny $c_{k}>0$ for every $k\geqslant 4$;
	\item In 2005 Green-Tao\cite{GT05} proved $r_{4}(N)\ll \exp (-(\log\log N)^{c})$ for some $c>0$ and improved this result to $1/(\log N)^{c}$ in 2017\cite{GT17};
	\item In 2024, Leng-Sah-Sawhney\cite{LSS} proved $r_{k}(N)\ll \exp (-(\log\log N)^{c_{k}})$ for some $c_{k}>0$ for every $k\geqslant 5.$
\end{itemize}\par
Up to now, we haven't reached the bound stronger than $1/\log N$ for any $k\geqslant 4$ which could imply the relative version of Szemer\'{e}di's theorem in the primes and Conjecture \ref{SR}.

\subsection{Results in the Model Setting}

The integer case is based on the cyclic group setting $\mathbb{Z}/N\mathbb{Z}$. Naturally we may investigate our problem in other extreme settings of finite Abelian group, say the vector space $\mathbb{F}_{q}^{n}$ or $\mathbb{F}_{q}[t]$ over $\mathbb{F}_{q}$ with characteristic $p\geqslant 3$, which is, technically easier to deal with because they are equipped with more additive structure(e.g. subgroups), and provides a model setting for $\mathbb{Z}/N\mathbb{Z}.$\par 
	For any fixed finite Abelian group $G$, denote by $R_{3}(G)$ the size of largest 3-AP-free subset $A\subset G.$
	\begin{itemize}
		\item In 1982, Brown and Buhler\cite{BB82} showed that $R_{3}(G)=o(|G|)$ for $G$ with odd order;
		\item In 1995, Meshulam\cite{M95} showed that $R_{3}(G)\leqslant 2|G|/rk(G)$ for $G$ with odd order following Roth's argument, especially $R_{3}(\mathbb{F}_{3}^{n}) \ll 3^{n}/n$;
		\item In 2004, Lev\cite{L04} first proved for the general case that $R_{3}(G)\leqslant 2|G|/rk(2G)$;
		\item In 2012, Bateman and Katz\cite{BK12} showed that $R_{3}(\mathbb{F}_{3}^{n}) \ll 3^{n}/n^{1+c}$ for some $c>0$;
		\item In 2016, Croot-Lev-Pach\cite{CLP} first introduced the polynomial method to this problem and derived that $R_{3}((\mathbb{Z}/4\mathbb{Z})^{n}) \ll 4^{\gamma{n}}$ with $\gamma\approx 0.926$;
		\item In 2016, Ellenberg and Gijswijt\cite{EG} followed CLP's polynomial method to show that $R_{3}(\mathbb{F}_{q}^{n})\ll c^{n}$ for some $c<q$ with char $\mathbb{F}_{q}=p\geqslant 3.$
	\end{itemize}
\subsection{Other Roth-Type Problems}
We can further generalize the traditional theorems of Roth and Szemer{\'e}di, roughly speaking, in the following two ways to obtain more expected results. We only show these types of generalization and omit the quantative bounds already obtained, on which readers may refer to the mentioned papers below.\par
\textit{Generalizing both coefficients and the number of variables.}
	\begin{itemize}
	\item$A\subset \mathbb{Z}/N\mathbb{Z}$ free of non-trivial solutions to $\sum\limits_{i=1}^{s} c_{i}x_{i}=0$, where $s\geqslant 3,c_{i}\neq 0,\sum\limits_{i=1}^{s}c_{i}=0$;\cite{B12}
	\item $A\subset G$ free of non-trivial solutions to $T_{1}x_{1}+T_{2}x_{2}+T_{3}x_{3}=0$, where $T_{i}\in \text{Aut}(G),T_{1}+T_{2}+T_{3}=0$. (Thus the privious coefficient $c_{i}$ can be identified with the scalar automorphism $c_{i}\text{Id}_{G}$.)\\
	An interesting corollary of \cite{PC22}, in which a bound of $O(1/(\log N)^{1+c})$ is obtained, is the conjecture of Shkredov and Solyevery: $A\subset\mathbb{Z}^{2}$ with $\sum_{a\in A\backslash\{0\}}1/\|a\|^{2}=\infty$ contains three vertices of an isosceles right triangle.
	
\end{itemize}\par
\textit{Generalizing the equation.}
\begin{itemize}
	\item $A\subset \mathbb{Z}/N\mathbb{Z}$ free of non-trivial solutions to $\sum\limits_{i=1}^{s} c_{i}x_{i}^{d}=0$, where $s\geqslant 3,c_{i}\neq 0,\sum\limits_{i=1}^{s} c_{i}=0$(linear equation restricted in the $d$-th powers);\cite{BP17}\cite{Ch}
	\item
	(First formulated by Bergelson-Leibman\cite{BL})$A\subset \mathbb{Z}/N\mathbb{Z}$ free of polynomial progression $\{x,x+P_{1}(y),...,x+P_{m}(y)\}$ where $y\neq 0$ and $P_{i}\in \mathbb{Z}[y]$ with zero constant coefficient, for instance, $\{x,x+y,x+y^{2}\}$.\cite{PS20}\cite{PP21}
	\item $A\subset (\mathbb{Z}/N\mathbb{Z})^k$ free of certain pattern, for instance, $k=2$,  $(x,y),(x+d,y),(x,y+d^{2})$ ($d\neq 0$). \cite{PPS24}
\end{itemize}

 \section{\textsc{Fundamental Ideas in the Proofs}}\label{4}
 In this section we will explain respectively in three different settings the ideas in the proofs of main Roth's theorems we care about. Readers should bear in mind that Fourier tools remain the core of studies about the integer and relative cases, even when modern approaches include more objects from additive combinatorics such as the Bohr set and many variants of Gowers norm. However in the finite field case, which is a model setting as we mentioned, the polynomial method has its distinctive advantages.
 
 \subsection{The Integer Case}
 
\subsubsection{\textit{Global 3-AP Counting}}

	Fix $A\subset\mathbb{Z}$, denote by $\Lambda(A)$ the number of 3-APs in $A$. We have
\begin{align}
	\Lambda(A)&=\sum\limits_{a,b,c\in A} 1_{a+b=2c}=\sum\limits_{x,y\in\mathbb{Z}}1_{A}(y)1_{A}(x-y)1_{2A}(x)=\langle 1_{A}\ast 1_{A}, 1_{2A}\rangle_{\mathbb{Z}} \label{(2)}\\
	&\xlongequal{\text{Parseval}}\langle \widehat{1_{A}}^{2}, \widehat{1_{2A}}\rangle_{\mathbb{T}}=\int_{0}^{1}S(\theta)^{2}S(-2\theta)\text{d}\theta, \label{(3)}
\end{align}

where $S(\theta)=\displaystyle{\sum\limits_{n\in A}e(n\theta)}.$ \par
We remark again that in the two representatives of $\Lambda(A)$, in analytic number theory it is standard to use {\color{red}(\ref{(3)})} since the exponential sum $S(\theta)$ is involved. While to apply more combinatorial approaches we instead choose to fix $A\subset \mathbb{Z}/N\mathbb{Z}=G$ and rewrite {\color{red}(\ref{(2)})} as
\begin{equation}\label{(4)}
	\Lambda(A)=\langle 1_{A}\ast 1_{A}, 1_{2A}\rangle_{G}=\langle \widehat{1_{A}}^{2}, \widehat{1_{2A}}\rangle_{\widehat{G}}, 
\end{equation}
where we tend to use the inner product in the physical space to exploit the additive structure of $A$, though in practice inequalities are established more easily in the Fourier space. Since we may count 3-APs in some subset $V\subset G$ where $V$ shares some additive information of $A$, the inner product w.r.t. $\mu_{V}$ is thus needed.
If $A$ contains only trivial 3-AP, $\Lambda(A)=|A|$. We want to deduce from it that $|A|=\sup\limits_{\theta\in[0,1]} |S(\theta)|=|\widehat{1_{A}}|_{\infty}$ is small. An alternative more natural idea is to assume at first that $A$ is a dense set of density $\alpha$, i.e. $|A|\asymp \alpha|G|$. Now we need to bound from below either the integral {\color{red}(\ref{(3)})} or the inner product {\color{red}(\ref{(4)})}.
\subsubsection{\textit{Hardy-Littlewood Circle Method}}
	To estimate the integral $\displaystyle{\int_{0}^{1}S(\theta)^{2}S(-2\theta)\text{d}\theta}$, we shall extract the place where $S(\theta)$ is large.\\
According to Weyl's equidistribution theory, $S(\theta)$ is small when $\theta$ is close to rational points, motivating us to decompose $\mathbb{T}$(equivalently, $[0,1]$) into the major arc $\mathfrak{M}$ and the minor arc $\mathfrak{m}$, where
\[
\mathfrak{M}=\bigcup\limits_{j=1}^{k}[\theta_{j}-\frac{1}{2N},\theta_{j}+\frac{1}{2N}],\quad \mathfrak{m}=[0,1]\backslash\mathfrak{M}
\]
for some chosen rationals $\theta_{j}\in[0,1]$ and $N$ large enough s.t. points in $[\theta_{j}-\frac{1}{2N},\theta_{j}+\frac{1}{2N}]$ can be approximated by rationals with small denominators and $\displaystyle{\int_{\mathfrak{M}}|S(\theta)|^2\text{d}\theta}$ is bounded above. Whereas $\mathfrak{m}$ is small to annihilate the large value of $S(\theta)$ on it. This is the basic principle of the famous Hardy-Littlewood circle method.
\par
In proving Theorem \ref{3.5}, Roth chose the the simplest major arc $\mathfrak{M}=[\eta,1-\eta] $ for some $0<\eta<\frac{1}{2}$. This leads to a coarse bound with two logs while easily extends to Theorem \ref{3.6}, where he chose the major arc to be away from the $l$ dimensional hyperplanes defined by the column vectors in the coefficient matrix.\par 
In \cite{HB87}, Heath-Brown remarkbly created a new large sieve method, which is seen as the origin of the density increment strategy afterwards. We state this lemma below. 
\begin{lmm}
	Put $S(\theta)=\displaystyle{\sum\limits_{n=1}^{N}a_{n}e(n\theta)}(a_{n}\in \mathbb{C})$. For any $M\leqslant N$ define $\xi(M)$ by
	\[
	M\xi(M)=\max{\sum\limits_{n\in\mathcal{P}\cap[N]}|a_{n}|},
	\]
	where $\mathcal{P}$ runs over all APs of length $M$. Let $\theta_{1},...\theta_{k}\in\mathbb{R}$ satisfy $\|\theta_{i}-\theta_{j}\|_{\mathbb{R}/\mathbb{Z}}\geqslant \dfrac{1}{N},\forall i\neq j$. Then we have
	\[
	\sum_{j=1}^{k}|S(\theta_{j})|^{2}\ll N^{2}\xi(M)\xi(N),
	\]
	where $M=\lfloor \frac{1}{2}M^{1/k}\rfloor$.
\end{lmm}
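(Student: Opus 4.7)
The plan is to prove this strengthened large sieve by a $k$-fold amplification of the classical Montgomery large sieve. Since the exponent $1/k$ in $M=\lfloor\tfrac{1}{2}N^{1/k}\rfloor$ is the only place where the number of points $k$ enters the right-hand side, and the classical large sieve applied directly only yields the weaker $\sum_{j}|S(\theta_{j})|^{2}\ll N\|a\|_{2}^{2}$, some amplification that iterates in a $k$-sensitive way is mandatory.

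First I would pass to the $k$-th power: set $T(\theta)=S(\theta)^{k}=\sum_{m}b_{m}e(m\theta)$, where $b_{m}=\sum_{n_{1}+\cdots+n_{k}=m,\,n_{i}\in[N]}a_{n_{1}}\cdots a_{n_{k}}$ is supported on $m\in[k,kN]$. The points $\theta_{j}$ remain $1/N$-separated, and $T$ has maximal frequency $kN$, so Montgomery's classical inequality applied to $T$ gives
\[
\sum_{j=1}^{k}|S(\theta_{j})|^{2k}=\sum_{j=1}^{k}|T(\theta_{j})|^{2}\ll (kN+N)\,\|T\|_{2}^{2}\ll kN\,\|S\|_{2k}^{2k}.
\]
Coupling this with the power-mean (Jensen) inequality $\sum_{j}|S(\theta_{j})|^{2}\leq k^{1-1/k}\bigl(\sum_{j}|S(\theta_{j})|^{2k}\bigr)^{1/k}$ reduces the lemma to a sharp bound on the $2k$-th moment $\|S\|_{2k}^{2k}=\int_{0}^{1}|S(\theta)|^{2k}\,d\theta$.

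Next, I would estimate $\|S\|_{2k}^{2k}$ combinatorially as the weighted count of $2k$-tuples $(n_{1},\ldots,n_{k};m_{1},\ldots,m_{k})\in[N]^{2k}$ satisfying the additive relation $n_{1}+\cdots+n_{k}=m_{1}+\cdots+m_{k}$, each weighted by $\prod a_{n_{i}}\overline{a_{m_{i}}}$. After fixing $2k-2$ of the variables, the linear constraint forces the last two coordinates onto an arithmetic progression whose common difference is determined by the fixed data; a pigeonhole over the admissible differences traps at least one of them into an AP of length $O(N^{1/k})$. Applying $\sum_{n\in\mathcal{P}}|a_{n}|\leq M\xi(M)$ to this trapped variable and $\|a\|_{1}=N\xi(N)$ to each unrestricted coordinate should yield an estimate of the shape $\|S\|_{2k}^{2k}\ll N^{2k-1}\xi(M)^{k}\xi(N)^{k}/k^{k}$, which after the $k$-th root in the reduction above balances exponents to exactly $N^{2}\xi(M)\xi(N)$.

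The principal obstacle---and the technical core of Heath-Brown's argument---is this last step. A naive bound $\|S\|_{2k}^{2k}\leq\|S\|_{\infty}^{2k-2}\|S\|_{2}^{2}\leq(N\xi(N))^{2k-2}\|a\|_{2}^{2}$ can only see $\xi(1)=\max_{n}|a_{n}|$ on the constrained variable rather than the desired $\xi(M)$, and is already lossy by a factor of $k$ in the simplest test case $a_{n}\equiv 1$ (where both sides of the lemma are $\asymp N^{2}$). Extracting the sharp $\xi(M)$ saving requires a layered pigeonhole that exploits the additive constraint $\sum n_{i}=\sum m_{i}$ to fit the trapped variable into an AP of length $\sim N^{1/k}$---this is precisely where the exponent $1/k$ in $M$ comes from, and is the key innovation over the standard Montgomery large sieve.
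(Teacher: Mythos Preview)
The paper does not supply a proof of this lemma; it merely states it and attributes it to Heath--Brown's 1987 paper. So there is no ``paper's own proof'' to compare against, only Heath--Brown's original argument.

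Heath--Brown's actual proof is \emph{not} a moment argument. The exponent $1/k$ in $M=\lfloor\tfrac12 N^{1/k}\rfloor$ comes from Dirichlet's theorem on simultaneous Diophantine approximation: since $(2M)^{k}\leqslant N$, there is an integer $q$ with $1\leqslant q\leqslant N$ and $\|q\theta_{j}\|\leqslant(2M)^{-1}$ for every $j$. One then partitions $[N]$ into arithmetic progressions of common difference $q$ and length at most $M$; on each such progression the phase $e(n\theta_{j})$ is essentially constant, so $S(\theta_{j})$ is well approximated by a new exponential sum whose coefficients $b_{h}=\sum_{n\in\mathcal{P}_{h}}a_{n}$ satisfy $|b_{h}|\leqslant M\xi(M)$ and $\sum_{h}|b_{h}|\leqslant N\xi(N)$. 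Applying the ordinary large sieve to this reduced sum and using $\sum_{h}|b_{h}|^{2}\leqslant(\max|b_{h}|)\sum|b_{h}|\leqslant M\xi(M)\cdot N\xi(N)$ gives the result. The number $k$ enters \emph{only} through the Dirichlet step.

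Your route is genuinely different, and the gap is exactly where you locate it. The amplification and Jensen steps are fine, but the moment estimate you need, $\|S\|_{2k}^{2k}\ll N^{2k-1}\xi(M)^{k}\xi(N)^{k}/k^{k}$, is simply false: for $a_{n}\equiv 1$ one has $\|S\|_{2k}^{2k}\asymp N^{2k-1}$ while your right-hand side is $N^{2k-1}/k^{k}$. Dropping the $1/k^{k}$ leaves you with an extra factor of $k$ in the final inequality, which already weakens the lemma; but more seriously, even the bound $\|S\|_{2k}^{2k}\ll N^{2k-1}\xi(M)^{k}\xi(N)^{k}$ is not something your sketch establishes. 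Fixing $2k-2$ variables in $\sum n_{i}=\sum m_{j}$ pins the remaining pair to a line $n_{k}-m_{k}=\text{const}$, which carries no information about either variable lying in a short arithmetic progression, and no amount of pigeonholing on ``admissible differences'' will manufacture an AP of length $N^{1/k}$ from a purely additive constraint. The scale $N^{1/k}$ is an artefact of Diophantine approximation against the $k$ specific frequencies $\theta_{1},\dots,\theta_{k}$, and your approach discards those frequencies the moment you pass to $\|S\|_{2k}$; there is no mechanism left by which it could resurface.
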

By combining the new lemma, the common large sieve method and the Hardy-Littlewood circle method he reached the polylogarithmic bound. This is probably the limit of the classical analytic approach, while the density increment on AP is a heuristic start for this new strategy.
\subsubsection{\textit{Density Increment Strategy}}
The disadvantage of pure analytic method is its ignorance of additive structure of the set. If we want to get the bound $r_{3}(N)\ll 1/(\log N)^{C}$ for $C\geqslant 1/2$, say, we must dig out more information from combinatorial side.
After restricting to a finite Abelian group $G$, we introduce the \textit{Bohr set} in $G$ to be\\
\[
B=\text{Bohr}_{\nu}(\Gamma):=\{x\in G: |1-\gamma(x)|\leqslant\nu,\forall\gamma\in\Gamma\},
\]
where $\nu\geqslant 0$ is the \textit{width} of $B$ and $\Gamma\subset\widehat{G}$ is the \textit{frequency set} of $B$, rk$(B):=|\Gamma|$. This is an analogy for the subspace in $\mathbb{F}_{q}^{n}$(rank$\iff$codim), sharing good properties of strucure and iteration. Intuitively we can estimate the size of a Bohr set from its rank and width, stated in Lemma \ref{4.2}(see \cite[Lemma 4.20]{AC} for a proof).  Moreover, indeed a Bohr set contains a long AP. For a proof of Lemma \ref{4.3} readers may refer to \cite[Lemma 29]{BS23}.
\begin{lmm}\label{4.2}
If $B\subset\mathbb{Z}/N\mathbb{Z}$ is a Bohr set with rank $d$ and width $\rho\in(0,1]$ then
\[
|B|\geqslant(\rho/8)^{d}N.
\]

\end{lmm}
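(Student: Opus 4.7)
My plan is to reduce the analytic Bohr condition to a ``box'' condition on the torus $\mathbb{T}^{d}$ and then apply a Dirichlet-type pigeonhole. Identify $\Gamma = \{\gamma_{k_{1}}, \ldots, \gamma_{k_{d}}\} \subset \widehat{\mathbb{Z}/N\mathbb{Z}}$ with its frequencies $k_{1}, \ldots, k_{d} \in \mathbb{Z}/N\mathbb{Z}$, where $\gamma_{k}(x) = e(kx/N)$. Using the elementary estimate $|1 - e(\theta)| \leq 2\pi\,\|\theta\|_{\mathbb{R}/\mathbb{Z}}$, the box Bohr set
\[
B' := \bigl\{x \in \mathbb{Z}/N\mathbb{Z} : \|k_{i}x/N\|_{\mathbb{R}/\mathbb{Z}} \leq \rho/(2\pi) \text{ for all } 1 \leq i \leq d\bigr\}
\]
is contained in $B$, so it suffices to show $|B'| \geq (\rho/8)^{d}N$.

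Next I would introduce the embedding $\Phi: \mathbb{Z}/N\mathbb{Z} \to \mathbb{T}^{d}$, $\Phi(x) = (k_{1}x/N, \ldots, k_{d}x/N) \bmod 1$, and partition $\mathbb{T}^{d}$ into $M^{d}$ half-open axis-aligned cubes of side $1/M$, where $M := \lceil 2\pi/\rho\rceil$. Pigeonholing the $N$ points $\Phi(0), \Phi(1), \ldots, \Phi(N-1)$, some cube receives at least $\lceil N/M^{d}\rceil$ of them; call the corresponding index set $\{x_{1}, \ldots, x_{m}\}$ with $m \geq N/M^{d}$. The $m$ differences $x_{j} - x_{1}$ ($1 \leq j \leq m$) are pairwise distinct, and each coordinate of $\Phi(x_{j} - x_{1}) = \Phi(x_{j}) - \Phi(x_{1})$ has $\mathbb{R}/\mathbb{Z}$-norm at most $1/M \leq \rho/(2\pi)$. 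Hence these differences all lie in $B' \subset B$, giving $|B| \geq N/M^{d}$.

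Finally, for $\rho \in (0,1]$ one has $M \leq 2\pi/\rho + 1 \leq 8/\rho$ (since $8 - 2\pi > 1 \geq \rho$), whence $|B| \geq (\rho/8)^{d}N$. There is no genuine obstacle here: the argument is a standard Dirichlet box-principle estimate, and the only point requiring care is the calibration of constants—trading the analytic width $\rho$ for the slightly tighter box width $\rho/(2\pi)$ so that pigeonhole outputs automatically lie in $B$, and then absorbing the ceiling in $M$ into the final constant $8$. Degeneracies such as repeated or zero frequencies in $\Gamma$ only collapse coordinates of $\Phi$ and can hence only improve the bound, so no further care is needed.
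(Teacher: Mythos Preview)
Your argument is correct and is precisely the standard Dirichlet box-principle proof. The paper does not actually supply its own proof of this lemma---it simply defers to \cite[Lemma~4.20]{AC}---and the argument there is essentially the one you give (embed into $\mathbb{T}^{d}$, pigeonhole into cubes, take differences), so there is nothing to compare beyond noting that your constant calibration matches the stated bound.
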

\begin{lmm}\label{4.3}
	Suppose $N$ is an odd prime and $B\subset \mathbb{Z}/N\mathbb{Z} $ is a Bohr set with rank $d$, then $B$ contains an AP of length $\gg |B|^{1/d}.$
\end{lmm}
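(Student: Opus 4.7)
The plan is to exploit that, because $N$ is prime, the map $x \mapsto (k_1 x/N,\ldots,k_d x/N)\bmod 1$ injects $\mathbb{Z}/N\mathbb{Z}$ into $\mathbb{T}^d$, so the cardinality of $B$ equals the number of image points landing in a small box of $\mathbb{T}^d$. From there a simple pigeonhole argument produces a non-zero common difference $q\in\mathbb{Z}/N\mathbb{Z}$ whose first $L$ multiples all lie in $B$, with $L\gg |B|^{1/d}$.

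First I would set up the dictionary. Identify $\widehat{\mathbb{Z}/N\mathbb{Z}}$ with $\mathbb{Z}/N\mathbb{Z}$ in the usual way and write $\Gamma = \{k_1,\ldots,k_d\}$, discarding any zero frequencies without changing $B$. The defining condition $|1-\gamma_{k_i}(x)|\leq \nu$ is equivalent, up to an absolute constant, to $\|k_i x/N\|_{\mathbb{R}/\mathbb{Z}} \leq \nu' := \nu/(2\pi)$. Define $\psi: \mathbb{Z}/N\mathbb{Z}\to\mathbb{T}^d$ by $\psi(x) = (k_1 x/N,\ldots,k_d x/N) \bmod 1$; then $B = \psi^{-1}(Q)$ where $Q := [-\nu',\nu']^d$. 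Because $N$ is prime and each $k_i$ is coprime to $N$, the map $\psi$ is injective, so $|B|$ equals the number of image points of $\psi$ inside $Q$.

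Second, I would run a pigeonhole on the target. Set $L := \lfloor |B|^{1/d}/3\rfloor$ (assuming $L\geq 1$; otherwise the statement is vacuous) and partition $Q$ into $(2L)^d$ congruent sub-boxes of side $\nu'/L$. Since $(2L)^d < |B|$ by the choice of $L$, two distinct elements $x_1,x_2 \in B$ must satisfy $\psi(x_1),\psi(x_2)$ in the same sub-box. Setting $q := x_1 - x_2$, we get $q\neq 0$ in $\mathbb{Z}/N\mathbb{Z}$ and $\|qk_i/N\|\leq \nu'/L$ for every $i$. The triangle inequality gives $\|jqk_i/N\|\leq \nu'$ for all $1\leq j\leq L$, hence $jq\in B$. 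Since $N$ is prime and $L< N$, the elements $q,2q,\ldots,Lq$ are pairwise distinct modulo $N$, exhibiting the required AP of length $L\gg |B|^{1/d}$ inside $B$.

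The main conceptual difficulty I anticipate is that the classical alternative --- applying Dirichlet's simultaneous Diophantine approximation to the frequencies $k_1/N,\ldots,k_d/N$ --- only yields an AP of length $\asymp \nu N^{1/d}$, which matches $|B|^{1/d}$ only in the \emph{generic} regime where $|B|\asymp \nu^d N$ (the sharp case of Lemma \ref{4.2}). When the frequencies are linearly dependent, $|B|$ may substantially exceed $\nu^d N$ and the Dirichlet bound becomes too weak. The pigeonhole-on-image approach above bypasses this by reasoning directly in terms of $|B|$, invoking primality of $N$ only to guarantee injectivity of $\psi$ and to ensure that any non-zero element of $\mathbb{Z}/N\mathbb{Z}$ has order $N$.
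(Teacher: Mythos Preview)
The paper does not supply its own proof of this lemma; it merely cites \cite[Lemma 29]{BS23}. Your argument is correct and is essentially the standard pigeonhole proof one finds in the literature, so there is nothing to compare against here beyond noting that your route matches the usual one.

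Two minor remarks. First, injectivity of $\psi$ is not actually needed for the pigeonhole step: you are pigeonholing the $|B|$ elements of $B$ into $(2L)^d$ boxes according to where $\psi$ sends them, and if two distinct $x_1,x_2\in B$ happen to have $\psi(x_1)=\psi(x_2)$ you are in an even better position, since then $\|k_i q/N\|=0$ for every $i$. Primality of $N$ is genuinely used only in the final line, to guarantee that a nonzero $q$ has additive order $N$ so that $q,2q,\ldots,Lq$ are pairwise distinct. Second, your closing paragraph correctly isolates the reason one cannot simply quote Dirichlet's simultaneous approximation theorem: that would give length $\asymp \nu N^{1/d}$, which can be much smaller than $|B|^{1/d}$ when the frequencies are dependent and $|B|$ exceeds the generic value $\nu^d N$. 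Pigeonholing on the image of $\psi$ is precisely how one sidesteps this, and your write-up handles it cleanly.
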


	A dichotomy point of view is that an additive set behaves either random or structured. We formulate this (informally) as follows.\par
Suppose $A\subset B$ for some Bohr set $B$ with rank $d$, $d_{B}(A)=\alpha$, then either:
\par
\textit{1. (Random-Many 3-APs)} $\Lambda(A)/|B|^{2}=\langle 1_{A}\ast 1_{A}, 1_{2A}\rangle_{\mu_{B}}\gg_{d} \alpha^{3}$;\par
\textit{2. (Structured-Density Increment)} $\exists$ some Bohr set $B'$ with rk$(B')\leqslant d+\Delta(\alpha)$, $|B'|\gg_{d}\delta(\alpha)|B|$ for some $\Delta(\alpha)>0,0<\delta(\alpha)<1$ and $x\in B$ s.t. $d_{B'}((A-x)\cap B')\geqslant (1+c)\alpha$ for some $c>0.$

Fix $B^{(0)}=G, A_{0}=A, \alpha_{0}=d_{G}(A)=\alpha$. Assume at first that $\Lambda(A)$ has a discrepancy with the expected ``random" value, which forces the second case. By iterating these steps we get a maximal sequence of Bohr sets $\{B^{(k)},\text{rk}(B^{(k)})=d_{k}\}$ on which some translates of $A$, denoted by $A_{k}$, have strictly increasing densities $\alpha_{k}=(1+c)^{k}\alpha$. Maximality holds due to a trivial bound of the density. Suppose we stop with
\begin{gather}
	(1+c)^{n}\alpha\leqslant 1\text{ for some }n\in\mathbb{Z^{+}}\Longrightarrow n\ll \log(2/\alpha),\notag\\
	\text{thus }d_{n}\leqslant n\Delta(\alpha)\ll \log(2/\alpha)\Delta(\alpha).\label{(5)}
\end{gather}

By maximality of $n$, for $B^{(n)}$ we must enter the first case. Thus
\[
\Lambda(A_{n})\gg_{d_{n}}\alpha_{n}^{3}|B^{(n)}|^{2}\gg_{d_{n}}\alpha^{3}\delta(\alpha)^{2n}|G|^{2}\gg_{d_{n}}\alpha^{3}\delta(\alpha)^{\log (2/\alpha)}|G|^{2}.
\] 
Combining this with {\color{red}(\ref{(5)})} to further bound  with $\alpha$ the implicit constant w.r.t. $d_{n}$, we derive a lower bound for $\Lambda(A_{n})$, thus for $\Lambda(A)$ by translation invariance. It turns out that this strategy is rather effective and contributes to the polylogarithmic bound with large exponent and even the quasi-polynomial bound, due to Bourgain\cite{B99}, Kelley-Meka\cite{KM} and Bloom-Sisask\cite{BS21}\cite{BS23}.\par 
Now we give more comments. Firstly we will explain what randomness tells us and the motivation for the random-structured dichotomy. A naive point of view is to look at the 3-AP counting in the first case above. To find out the solution to the equation $x-2y+z=0$, we can just pick arbitrary $x,y\in A$ and then count the number of $z=2y-x$ lying in $A$. $A$ is random means, to some extent, that \[
\frac{|(x,y)\in A\times A: z=2y-x\in A|}{|A|^{2}}\approx \alpha=d_{B}(A)
\]
for any fixed $x,y\in A$, thus $\Lambda(A)\approx |A|^{2}\alpha=|B|^{2}\alpha^{3}$. However if $A$ only contains trivial 3-APs, there is a discrepancy of the value of $\Lambda(A)$ from the expected one, which then forces $A$ to be structured. In fact following \cite[Section 4.3]{AC} and \cite{BS23} we can give more descriptions of the relevant phenomena.\par 
Given an additive set $A\subset G$ where $G$ is finite, let us define first the additive energy \[
E(A,A):=|(a_{1},a_{2},a_{3},a_{4})\in A\times A\times A\times A:a_{1}+a_{3}=a_{2}+a_{4}|
\]
and the pseudo-random
\[
 \|A\|_{u}:=\sup\limits_{\xi\neq 0}|\widehat{\mu_{A}}(\xi)|.
\]
Note again that $\mu_{A}=\frac{|G|}{|A|}1_{A}$ is the function induced by the probability measure on $A$. We can actually interpret that $A$ is uniform or random if it has large sumset or small additive energy, i.e. few outputs of the sum of two elements from $A$ coincide, and a little bit more subtle, if it has small pseudo-random. Let us explain it below.\par 
In \cite{KM} Kelley-Meka discovered that we can always find a large subgroup $V\leqslant G$(in particular, a subspace of $\mathbb{F}_{q}^{n}$) on which $A$ is uniform, in the sense that $|(A+A)\cap V|$ is large, or equivalently
\begin{equation}\label{(6)}
\mu_{A}\ast\mu_{A}(x)\approx\|\mu_{A}\ast\mu_{A}\|_{1(\mu_{V})}=\frac{|G|}{|A|^{2}|V|}\sum\limits_{x\in V}\sum\limits_{a,b\in A}1_{a+b=x}=\frac{|G|}{|V|}
\end{equation}
for any $x\in V$, which can be deduced from an upper bound for  $\|\mu_{A}\ast\mu_{A}-\frac{|G|}{|V|}\|_{p(\mu_{V})}$. Motivated by this, all their work then starts with a simple H\H{o}lder lifting from the discrepancy condition\cite[Lemma 11]{BS23}:
\begin{lmm}\label{4.4}
	If $|\langle \mu_{A}\ast \mu_{A}, \mu_{C}\rangle-1|\geqslant \epsilon$ for some $\epsilon>0$, then $\|\mu_{A}\circ \mu_{A}-1\|_{p}\geqslant \frac{\epsilon}{2}$ for some $p\ll \log(2/\gamma)$, where $\gamma=d_{G}(C)$.
\end{lmm}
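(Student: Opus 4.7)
The argument is a H\"{o}lder lifting in duality: the hypothesis is an $L^\infty$-style correlation bound against the probability measure $\mu_C$, and I want to upgrade it to an $L^p$ lower bound for the fluctuation $\mu_A\circ\mu_A-1$, by choosing $p$ large enough that the dual norm $\|\mu_C\|_{p'}$ (with $1/p+1/p'=1$) stays bounded by an absolute constant. As a first step I would rewrite the hypothesis in mean-zero form: since $\widehat{\mu_A\ast\mu_A}(0)=\widehat{\mu_C}(0)=1$, it becomes $|\langle \mu_A\ast\mu_A - 1,\ \mu_C\rangle_G| \geq \epsilon$.

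Next I would pass from the sum-convolution to the difference-convolution via Fourier. Parseval gives
\[
\langle \mu_A\ast\mu_A - 1,\ \mu_C\rangle_G = \sum_{\xi\neq 0}\widehat{\mu_A}(\xi)^{2}\,\overline{\widehat{\mu_C}(\xi)},
\]
so by the triangle inequality $\sum_{\xi\neq 0}\widehat{\mu_A\circ\mu_A}(\xi)\,|\widehat{\mu_C}(\xi)| \geq \epsilon$, using the crucial identity $\widehat{\mu_A\circ\mu_A}(\xi)=|\widehat{\mu_A}(\xi)|^{2}$. Absorbing the unit-modulus phases $\overline{\widehat{\mu_C}(\xi)}/|\widehat{\mu_C}(\xi)|$ into a modified test function $\tilde\mu_C$ (obtained by inverse Fourier) yields $|\langle \mu_A\circ\mu_A - 1,\ \tilde\mu_C\rangle_G| \geq \epsilon$. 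Applying H\"{o}lder's inequality gives
\[
\epsilon \leq \|\mu_A\circ\mu_A - 1\|_{p} \cdot \|\tilde\mu_C\|_{p'},
\]
while a direct computation from $\mu_C=(|G|/|C|)1_C$ yields $\|\mu_C\|_{p'} = \gamma^{-1/p}$. Choosing $p := \lceil \log_{2}(2/\gamma)\rceil \ll \log(2/\gamma)$ makes $\gamma^{-1/p}\leq 2$, and, provided $\|\tilde\mu_C\|_{p'}\leq\|\mu_C\|_{p'}$, this delivers $\|\mu_A\circ\mu_A - 1\|_{p} \geq \epsilon/2$, as required.

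The subtle point, and the main obstacle, is controlling $\|\tilde\mu_C\|_{p'}$ by $\|\mu_C\|_{p'}$ after the phase absorption: for $p'=2$ this is immediate from Parseval, but for general $p'$ modifying Fourier phases can in principle enlarge the $L^{p'}$ norm. I expect the resolution along the lines of \cite{BS23} to either (i) symmetrize $A$ at the outset (replacing it by $A\cap(-A)$ with a controlled density loss so that $\widehat{\mu_A}$ is real and the $\ast$ and $\circ$ convolutions simply coincide), or (ii) circumvent the phase modification by a preparatory Cauchy--Schwarz step that keeps only Fourier magnitudes. Aside from this bridge, the proof is a routine duality computation, and the prescription $p\asymp \log(2/\gamma)$ is exactly what it takes to balance $\gamma^{-1/p}$ against the target constant $2$.
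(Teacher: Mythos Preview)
Your outline has the right global shape---mean-zero rewriting, H\"older, and the choice $p\asymp\log(2/\gamma)$ so that $\|\mu_C\|_{p'}=\gamma^{-1/p}\le 2$---but the bridge from $\mu_A\ast\mu_A$ to $\mu_A\circ\mu_A$ is where your argument stalls, and your two proposed fixes are not the ones that actually work here. Modifying the Fourier phases of $\mu_C$ to produce $\tilde\mu_C$ and then hoping $\|\tilde\mu_C\|_{p'}\le\|\mu_C\|_{p'}$ is exactly the step that fails for $p'\ne 2$, as you note; symmetrising $A$ is not permitted by the lemma as stated and would change $\mu_A$; and ``a preparatory Cauchy--Schwarz step'' is too vague to count as a proof.

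The paper's resolution is cleaner and avoids touching $\mu_C$ at all. One first applies H\"older directly to the pair $(\mu_A\ast\mu_A-1,\ \mu_C)$ to obtain $\|\mu_A\ast\mu_A-1\|_p\ge\epsilon/2$ for some $p\ll\log(2/\gamma)$, and only \emph{then} compares the two $L^p$ norms: for $p$ an \emph{even integer},
\[
\|\mu_A\ast\mu_A-1\|_p^p
=\bigl(\widehat{\mu_A}^{\,2}\,1_{\ne 0}\bigr)^{(p)}(0)
\ \le\ \bigl(|\widehat{\mu_A}|^{2}\,1_{\ne 0}\bigr)^{(p)}(0)
=\|\mu_A\circ\mu_A-1\|_p^p,
\]
where $^{(p)}$ denotes $p$-fold convolution on $\widehat G$. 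The inequality is just the triangle inequality applied term-by-term in the sum $\sum_{\xi_1+\cdots+\xi_p=0}\prod_j\widehat{\mu_A}(\xi_j)^2$, and it is here that parity of $p$ is used (so that the left-hand side is a genuine moment, not an absolute moment). This is the ``missing idea'': move the $\ast\to\circ$ passage to the \emph{function} side via an $L^p$-norm comparison for even $p$, rather than trying to absorb phases into the test function $\mu_C$.
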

\begin{proof}
	By H\H{o}lder's inequality, \[
	\epsilon\leqslant|\langle \mu_{A}\ast \mu_{A}-1, \mu_{C}\rangle|\leqslant\|\mu_{A}\ast \mu_{A}-1\|_{p}\|\mu_{C}\|_{p/p-1}\leqslant 2\|\mu_{A}\ast \mu_{A}-1\|_{p}
	\]
	for sufficiently large $p\ll \log(2/\gamma)$ s.t. $\|\mu_{C}\|_{p/p-1}=\gamma^{-1/p}\leqslant 2.$\par
	Now it suffices to prove $\|\mu_{A}\ast \mu_{A}-1\|_{p}\leqslant\|\mu_{A}\circ \mu_{A}-1\|_{p}.$ In fact when $p$ is an even integer,
	 \[|\mu_{A}\ast \mu_{A}-1\|_{p}^{p}=(\widehat{\mu_{A}}^{2}1_{\neq 0_{\widehat{G}}})^{(p)}(0_{\widehat{G}})\leqslant(|\widehat{\mu_{A}}|^{2}1_{\neq 0_{\widehat{G}}})^{(p)}(0_{\widehat{G}})=\|\mu_{A}\circ \mu_{A}-1\|_{p}^{p}\]
	 where $\cdot^{ (p)    }$ is the $p$-fold convolution.\qedhere
	\end{proof}
	Back to our remark about the uniformity, the property {\color{red}(\ref{(6)})} actually correlates with the pseudo-random. A version of \cite[Lemma 4.13]{AC} tells us
	\begin{lmm}[Small pseudo-random implies uniformity/large sumsets]\label{4.5}
		For $n\in\mathbb{Z^{+}}, n\geqslant 3$ we have\[
		\alpha^{n-1}\|\mu_{A}^{(n)}-1\|_{\infty}\leqslant \|A\|_{u}^{n-2},
		\]
		where $\alpha=d_{G}(A).$\\
		In particular if $\|A\|_{u}^{n-2}<\alpha^{n-1}$, then $nA=G.$
	\end{lmm}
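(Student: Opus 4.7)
The natural route is Fourier analysis, exploiting the pseudo-randomness hypothesis $|\widehat{\mu_A}(\gamma)|\leqslant\|A\|_u$ for every non-trivial character $\gamma$. First I would pass to the Fourier side via $\widehat{\mu_A^{(n)}}=\widehat{\mu_A}^{\,n}$ together with $\widehat{\mu_A}(0_{\widehat G})=\mathbb{E}_x\mu_A(x)=1$, so that Fourier inversion gives
\[
\mu_A^{(n)}(x)-1\;=\;\sum_{\gamma\neq 0_{\widehat G}}\widehat{\mu_A}(\gamma)^{n}\gamma(x).
\]
Taking absolute values, applying the triangle inequality, and factoring out $n-2$ copies of the $\|A\|_u$ bound (this is where $n\geqslant 3$ is used), I am left with the $\ell^{2}$ tail
\[
\|\mu_A^{(n)}-1\|_\infty\;\leqslant\;\|A\|_u^{\,n-2}\sum_{\gamma\neq 0}|\widehat{\mu_A}(\gamma)|^{2}.
\]

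Next the tail is evaluated by Parseval: $\sum_{\gamma}|\widehat{\mu_A}(\gamma)|^{2}=\mathbb{E}_x\mu_A(x)^{2}=\alpha^{-1}$, whence $\sum_{\gamma\neq 0}|\widehat{\mu_A}(\gamma)|^{2}\leqslant\alpha^{-1}$. This already produces the sharper estimate $\alpha\,\|\mu_A^{(n)}-1\|_\infty\leqslant\|A\|_u^{\,n-2}$; since $\alpha\leqslant 1$, multiplying the left-hand side by the trivial factor $\alpha^{n-2}\leqslant 1$ recovers the stated inequality $\alpha^{n-1}\|\mu_A^{(n)}-1\|_\infty\leqslant\|A\|_u^{\,n-2}$.

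For the ``in particular'' clause, unpacking the iterated convolution yields
\[
\mu_A^{(n)}(x)\;=\;\frac{|G|}{|A|^{n}}\,\bigl|\{(a_{1},\dots,a_{n})\in A^{n}:a_{1}+\cdots+a_{n}=x\}\bigr|,
\]
so $\mu_A^{(n)}(x)>0$ is equivalent to $x\in nA$. Under $\|A\|_u^{\,n-2}<\alpha^{n-1}$ the main inequality forces $\|\mu_A^{(n)}-1\|_\infty<1$, hence $\mu_A^{(n)}>0$ pointwise and therefore $nA=G$. The argument is essentially routine; the only points demanding care are the paper's normalisation conventions (convolution and inner product as expectations over $G$, Fourier inversion as a plain sum over $\widehat G$, so that $\|\mu_A\|_{2}^{2}=\alpha^{-1}$) and the harmless slack $\alpha\mapsto\alpha^{n-1}$ used to match the stated form — no genuinely hard step is involved.
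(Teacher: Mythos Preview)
Your argument is correct and is precisely the standard Fourier-analytic proof; the paper does not actually supply its own proof but simply attributes the lemma to \cite[Lemma~4.13]{AC}, where the same inversion--plus--Parseval computation appears. Your observation that the argument in fact yields the sharper bound $\alpha\,\|\mu_A^{(n)}-1\|_\infty\leqslant\|A\|_u^{\,n-2}$, with the stated $\alpha^{n-1}$ arising only after a harmless weakening, is also accurate.
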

	Secondly, we shall note that some of the results including the formal density increment lemma and Lemma \ref{4.4} could only be applied to the finite field setting in a clean way. This is again because $\mathbb{F}_{q}^{n}$ has very nice structures $\mathbb{Z}/N\mathbb{Z}$ doesn't have. Actually Kelley-Meka only derived the uniformity lemma for $\mathbb{F}_{q}^{n}$, and chose a more ad-hoc way to deal with $\mathbb{Z}/N\mathbb{Z}$. Also, when considering density increment in $\mathbb{Z}/N\mathbb{Z}$ we should introduce the regular Bohr set which guaranteens $B+B_{\epsilon}\approx B$ where $B_{\epsilon}=$Bohr$_{\epsilon\nu}(\Gamma)$ is a narrowed copy of $B$. (Addition is not necessarily closed in general Bohr sets!)
	\begin{defi}[Regularity]
		A Bohr set of rank $d$ is regular if for all $0\leqslant\delta\leqslant 1/100d$ we have 
		\[
		(1-100d\delta)|B|\leqslant|B_{1\pm\delta}|\leqslant(1+100d\delta)|B|.
		\]
	\end{defi}
	 It is still glad to see the arguments work in the finite field setting and guide us to the integer setting. However, in the finite field setting these arguments eventually could only get the quasi-polynomial bound, which is far from the power of the polynomial method.\par
	Thirdly, as we mentioned in section \ref{1}, almost all strongest arguments on Roth-type theorems with single linear equation include the results based on the Bogolyubov-Ruzsa lemma, which is in fact an application of the almost periodicity argument by Croot-Sisask\cite{CS10}. One of many versions of almost periodcity with our familiar settings is as follows\cite[Theorem 5.4]{SS16}.
	\begin{thm}[$L^{\infty}$-almost-periodicity with Bohr sets]\label{4.7}
		Let $\epsilon\in(0,1)$. Let A, M, L be subsets of a finite Abelian group $G$, and let $B\subset G$ be a regular Bohr set of rank $d$ and width $\rho$. Suppose $|A+T|\leqslant K|A|$ for a subset $T\subset B$ with $d_{B}(T)=\tau>0$, and assume $\mu:=|M|/|L|\leqslant 1$. Then there is a regular Bohr set $B'\subset B$ with rank at most $d+d'$ and width at least $\rho\epsilon\mu^{1/2}/d^{2}d'$, where
	\[
	d'\ll \epsilon^{-2}\log^{2}(2/\epsilon\mu)\log(2/\mu)\log(2K)+\log(1/\tau),
	\]
		such that
		\[
		\|\mu_{A}\ast\mu_{M}\ast 1_{L}(\cdot+t)-\mu_{A}\ast\mu_{M}\ast 1_{L}\|_{\infty}\leqslant \epsilon \text{ for all }t\in B'.
		\]
		In particular,
		\[
		\|\mu_{A}\ast\mu_{M}\ast 1_{L}\ast\mu_{B'}-\mu_{A}\ast\mu_{M}\ast 1_{L}\|_{\infty}\leqslant \epsilon .
		\]
	\end{thm}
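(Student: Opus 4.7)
The plan is to combine the Croot--Sisask probabilistic almost-periodicity argument with a Bohr-set extraction step. The goal is to produce a large subset of $B$ whose elements act as $L^\infty$-approximate periods of $f := \mu_A\ast\mu_M\ast 1_L$, and then to upgrade this subset to a regular Bohr set with the advertised rank and width.

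First I fix a sample size $k$ to be chosen later and draw $t_1,\ldots,t_k$ independently uniform from $T$; form the random function
$$g(x) := \frac{1}{k}\sum_{i=1}^{k}\mu_A(x-t_i),\qquad \mathbb{E}\,g = \mu_A\ast\mu_T.$$
For every even integer $p$, a Rosenthal / Marcinkiewicz--Zygmund inequality (applied pointwise and then integrated against $\mu_L$ after convolution with $\mu_M\ast 1_L$) yields
$$\mathbb{E}\,\bigl\|(g-\mu_A\ast\mu_T)\ast\mu_M\ast 1_L\bigr\|_{p(\mu_L)}^{p}\ \ll\ \Bigl(\tfrac{p}{k}\Bigr)^{p/2}\bigl\|\mu_A\ast\mu_M\ast 1_L\bigr\|_{p(\mu_{L+T})}^{p}.$$
The hypothesis $|A+T|\leq K|A|$ bounds the right-hand factor by $O(K)^{p/2}$. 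Choosing $p\asymp\log(2/\mu)$ turns $\|\cdot\|_{p(\mu_L)}$ into an essentially equivalent $L^\infty$-norm, since the factor $\mu^{-1/p}$ relating them is $O(1)$; then picking $k\gg\epsilon^{-2}\log(2K)\log(2/\mu)$ ensures the $L^\infty$-error is at most $\epsilon/4$ with positive probability.

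Next I observe that the sampling map is translation equivariant: shifting every $t_i$ by the same $s$ shifts $g$ by $s$. Hence, if two ``good'' samples differ only by a global shift $s\in B-B$, then $s$ is an $(\epsilon/2)$-approximate period of $\mu_A\ast\mu_T\ast\mu_M\ast 1_L$ in $L^\infty$. A pigeonhole argument on the image of the sampling map---whose range lies in $(A+T)^k$ of size at most $(K|A|)^k$, while the domain $T^k$ has size $(\tau|B|)^k$---furnishes a set $S\subseteq B-B$ of such shifts whose density in $B$ is controlled below by an explicit function of $\tau$ and $K$. A final invocation of $B$-regularity replaces $\mu_A\ast\mu_T$ by $\mu_A$ at the cost of an additional $\epsilon/4$, giving approximate periods of $f$ itself.

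To convert $S$ into a regular Bohr set I apply Chang's theorem to its large spectrum $\{\gamma\in\widehat G:|\widehat{1_S}(\gamma)|\geq\eta|S|\}$ with $\eta\asymp\epsilon$; this produces a frequency set $\Gamma'\subseteq\widehat G$ of size bounded by the claimed $d'$, whose associated Bohr set is contained, after a standard doubling, in the $O(\epsilon)$-period set of $f$. A width-regularization step in the style of Bourgain yields the required regular $B'$, and Lemma \ref{4.2} guarantees it is substantial. The hard part is matching the exact combination of factors in $d'$: each of $\epsilon^{-2}$, $\log^{2}(2/\epsilon\mu)$, $\log(2/\mu)$, $\log(2K)$, and $\log(1/\tau)$ enters at a different step (concentration, spectral extraction, $p(\mu_L)$-to-$L^\infty$ conversion, sampling, pigeonhole, respectively), and $k$ must be chosen once so that all five factors come out right while $B'$ remains regular of width $\gg\rho\epsilon\mu^{1/2}/d^{2}d'$.
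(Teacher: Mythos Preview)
The paper does not prove Theorem~\ref{4.7}; it is quoted without proof from \cite[Theorem~5.4]{SS16}. Your outline does follow the architecture actually used there---Croot--Sisask random sampling with Marcinkiewicz--Zygmund concentration, an $L^{p}$-to-$L^{\infty}$ upgrade via $p\asymp\log(2/\mu)$, a pigeonhole to locate a dense set of approximate periods inside $B-B$, and Chang's lemma plus Bourgain regularisation to extract the regular Bohr set $B'$---so in broad strokes you have the right plan.

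There is, however, a real error in your sampling step. You draw the $t_i$ from $T$, so your empirical average approximates $\mu_A\ast\mu_T\ast\mu_M\ast 1_L$, and you then assert that ``$B$-regularity replaces $\mu_A\ast\mu_T$ by $\mu_A$''. Regularity of $B$ controls only $|B_{1\pm\delta}|/|B|$; it says nothing about $\|\mu_A\ast\mu_T-\mu_A\|$ for an arbitrary dense $T\subset B$, and no such bound holds in general. The correct argument samples $a_1,\dots,a_k$ uniformly from $A$, so that $\tfrac{1}{k}\sum_i \mu_M\ast 1_L(\cdot-a_i)$ directly approximates $f=\mu_A\ast\mu_M\ast 1_L$; the pigeonhole is then on the map $\mathcal G\times T\to (A+T)^k$, $((a_i),t)\mapsto(a_i+t)$, with $\mathcal G\subset A^k$ the set of good tuples. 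That is exactly where $|A+T|\leqslant K|A|$ enters, and it produces a period set in $T-T\subset B-B$ of relative density $\gg\tau/(2K)^{k}$, which accounts for the $\log(1/\tau)$ and $k\log(2K)$ contributions to $d'$. Your description of the pigeonhole (``domain $T^k$, range $(A+T)^k$'') does not correspond to any coherent map once the $t_i$ are drawn from $T$, and reflects this swap of the roles of $A$ and $T$.
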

	From the statement we are essentially finding the ``almost periods" of $\mu_{A}\ast\mu_{M}\ast1_{L}$, which is basically saying that if $A$ is additively structured, we can always find a large Bohr set in an iterated sum or difference of sets. In philosophy it obeys the dichotomy of the density increment we have discussed, contributing to the success of Bloom-Sisask.\par
	Besides, we can sense in the introduction of additive energy, along with Lemma \ref{4.5} and Theorem \ref{4.7} where multi-fold convolutions are involved, more variables can make our problems easier. The iterated sum of sets is actually a sort of smoothing, making sets behave more random, or making functions after multi-fold convolutions more periodic. Back in the 2010s, quasi-polynomial bounds have been established for single linear equation with more than four variables such as $x+y+z-3w=0$\cite{SS16}. Thus to study $r_{3}(N)$ we need to introduce more variables. Bloom-Sisask successfully combined the arguments of Kelley-Meka such as H\H{o}lder lifting and unbalancing, with Theorem \ref{4.7} via the so-called ``dependent random choice", finally deducing the integer version of the uniformity lemma\cite[Theorem 5]{BS23}. They thought the dependent random choice was the most crucial step in their whole paper, which transformed the $L^{p}$ information into some cutting information of more variables. We state the finite field version here\cite[Lemma 8]{BS23}. Note that $A_{1},A_{2},S$ play the role of $A,M,L$ in Theorem \ref{4.7} respectively.
	\begin{prop}
		If $\|\mu_{A}\circ\mu_{A}\|_{p}\geqslant 1+\epsilon$ for some $\epsilon>0$, then there are $ A_{1},A_{2}\subset A$ of density at least $\alpha^{2p+O_{\epsilon}(1)}$ s.t. 
		\[
		\langle \mu_{A_{1}}\circ \mu_{A_{2}}, 1_{S}\rangle\geqslant 1-\frac{\epsilon}{4},
		\]
		where $S=\{x:\mu_{A}\circ\mu_{A}>1+\frac{\epsilon}{2}\}.$
	\end{prop}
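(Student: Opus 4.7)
The plan is to follow the dependent random choice (DRC) template of Kelley--Meka and Bloom--Sisask. Set $f:=\mu_A\circ\mu_A$; this function is non-negative with mean $\mathbb{E}_x f(x)=|\widehat{\mu_A}(0)|^2=1$, and the hypothesis reads $\|f\|_p\geq 1+\epsilon$. The goal is to convert this analytic excess into subsets $A_1,A_2\subset A$ whose pairs $(a_1,a_2)$ mostly satisfy $a_1-a_2\in S$.

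First I would upgrade the $L^p$-excess of $f$ to $L^p$-concentration on $S$ by unbalancing. Since $f\leq 1+\epsilon/2$ on $S^c$ and $\int_{S^c}f\leq \int f=1$, one has $\int_{S^c}f^p\leq(1+\epsilon/2)^{p-1}$, and hence
\[ \int_S f^p\;\geq\; (1+\epsilon)^p-(1+\epsilon/2)^{p-1}. \]
For any prescribed $\eta>0$, choosing $p$ of order $\epsilon^{-1}\log(1/\eta)$ forces the right-hand side to be at least $(1-\eta)\|f\|_p^p$. I will fix $\eta=\epsilon/16$, so $p=p(\epsilon)=O(\epsilon^{-1}\log(1/\epsilon))$, and almost all of the $L^p$-mass of $f$ sits on $S$.

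Next I would carry out the DRC step. Expanding
\[ f(x)^p=\mathbb{E}_{\bar y\in G^p}\prod_{i=1}^p \mu_A(x+y_i)\mu_A(y_i) \]
and writing $B(\bar y):=\bigcap_{i=1}^p(A-y_i)$, a direct calculation shows
\[ \int f^p=\alpha^{-p}\,\mathbb{E}_{\bar y\in A^p}\tfrac{|B(\bar y)|}{|G|},\qquad \int_S f^p=\alpha^{-p}\,\mathbb{E}_{\bar y\in A^p}\tfrac{|B(\bar y)\cap S|}{|G|}, \]
so the conclusion of the previous step becomes $\mathbb{E}_{\bar y\in A^p}|B(\bar y)\cap S|\geq(1-\epsilon/16)\mathbb{E}_{\bar y\in A^p}|B(\bar y)|$. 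The DRC construction then samples two independent tuples $\bar y,\bar z\in A^p$ and sets $A_1:=A\cap B(\bar y)$ and $A_2:=A\cap B(\bar z)$; both are subsets of $A$ by construction. For any pair $(a_1,a_2)\in A_1\times A_2$ and any index $i$, the shifted pair $(a_1+y_i,a_2+y_i)\in A\times A$ realises the same difference $a_1-a_2$, which is the combinatorial bridge back to $f(a_1-a_2)$.

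A Markov/pigeonhole argument on the joint distribution of $(\bar y,\bar z)$ then extracts concrete tuples for which $|A_1|,|A_2|\geq \alpha^{2p+O_\epsilon(1)}|G|$ (each tuple contributes an average density factor $\alpha^p$, the constraint $a\in A$ supplies the ``$+1$'', and the Markov slack together with the cost of $p$ are absorbed into $O_\epsilon(1)$) and $\langle \mu_{A_1}\circ\mu_{A_2},1_S\rangle\geq 1-\epsilon/4$. The hard part is precisely this simultaneous coupling of density and $S$-concentration in a single random draw: a naive Markov on the concentration alone would cost a multiplicative density loss and vice versa, so the unbalancing in the first step must be strong enough — hence the choice $p\asymp\epsilon^{-1}\log(1/\epsilon)$ — that enough slack remains in both objectives after a single pigeonhole. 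Calibrating the two losses against each other feeds $p$ into the density exponent and yields exactly the announced $\alpha^{2p+O_\epsilon(1)}$.
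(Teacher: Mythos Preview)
The paper does not prove this proposition at all: it is stated with a bare citation to \cite[Lemma~8]{BS23} and then the exposition moves on. So there is no in-paper argument to compare against; what one can say is that your outline (unbalancing followed by dependent random choice) is precisely the Bloom--Sisask/Kelley--Meka scheme that the citation points to.

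Two places in your sketch need tightening. First, $p$ is a \emph{given} parameter in both hypothesis and conclusion, not a quantity you are free to set to $O(\epsilon^{-1}\log(1/\epsilon))$. Your unbalancing inequality $\int_{S^c}f^p\le(1+\epsilon/2)^{p-1}$ is correct, but it only forces $\int_S f^p\ge(1-\eta)\int f^p$ once $p\gtrsim\epsilon^{-1}\log(1/\eta)$. The standard repair is to invoke monotonicity of $L^p$-norms on a probability space: replace $p$ by $p'=\max(p,\,C\epsilon^{-1}\log(1/\epsilon))$, run the whole argument at exponent $p'$, and note that $2(p'-p)=O_\epsilon(1)$ is absorbed into the $O_\epsilon(1)$ already present in the density exponent $\alpha^{2p+O_\epsilon(1)}$. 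You should say this explicitly rather than ``choose $p$''.

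Second, and more seriously, the combinatorial bridge you describe is not right. With $A_1=A\cap B(\bar y)$ and $A_2=A\cap B(\bar z)$ for \emph{independent} tuples, the pair $(a_1+y_i,a_2+y_i)$ need not lie in $A\times A$ (only $a_1+y_i\in A$ is guaranteed), and in any case that shift does not witness $a_1-a_2\in S$. Your computed identity $\int_S f^p=\alpha^{-p}\,\mathbb{E}_{\bar y\in A^p}|B(\bar y)\cap S|/|G|$ is a statement about \emph{single} points $x\in S\cap B(\bar y)$, not about pairs in $A_1\times A_2$; something further is needed to pass from that to the bilinear condition $\langle\mu_{A_1}\circ\mu_{A_2},1_S\rangle\ge 1-\epsilon/4$. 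In the Bloom--Sisask argument this is done by expanding a genuinely bilinear quantity (morally $\mathbb{E}_{a_1,a_2\in A}\,1_S(a_1-a_2)\,w(a_1)\,w(a_2)$ with weights coming from the random tuples) so that the two tuples are coupled through the difference $a_1-a_2$, and then a single averaging extracts $A_1,A_2$ simultaneously. Your sketch has the right global shape, but this coupling identity --- the actual ``dependent'' part of the dependent random choice --- is the step that needs to be written down, and as stated your version of it does not connect.
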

	Lastly, combining \cite[Theorem 5]{BS23} with Lemma \ref{4.3}, there is an important by-product(\cite[Theorem 3]{BS23}) stated as follows.
	\begin{thm}
		If $A\subset [N]$ has size $\alpha N$, then $A+A+A$ contains an AP of length\[\geqslant\exp(-C\log(2/\alpha)^{3})N^{c/\log(2/\alpha)^{9}},\]
		where $C,c>0$ are constants.
	\end{thm}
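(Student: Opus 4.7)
The plan is to combine the integer Bogolyubov-Ruzsa-type statement of Bloom-Sisask (namely Theorem 5 of \cite{BS23}) with the elementary fact that every low-rank Bohr set contains a long arithmetic progression (Lemma \ref{4.3}).

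First I would transfer the problem to a cyclic group of prime order: pick an odd prime $N'\in[3N,6N]$ and identify $A$ with its image in $\mathbb{Z}/N'\mathbb{Z}$ under the canonical embedding $[N]\hookrightarrow\mathbb{Z}/N'\mathbb{Z}$. The density drops by at most a factor of $6$, and since $A+A+A\subset[3N]$ lies inside a fundamental domain of $\mathbb{Z}/N'\mathbb{Z}$, any AP found inside $A+A+A\pmod{N'}$ lifts without wraparound to a genuine AP of the same length in $\mathbb{Z}$.

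Next I would invoke Bloom-Sisask's integer Bogolyubov-type theorem (Theorem 5 of \cite{BS23}). It produces a regular Bohr set $B\subset\mathbb{Z}/N'\mathbb{Z}$ of rank $d\ll\log(2/\alpha)^{9}$ and width $\rho\geqslant\exp(-C\log(2/\alpha)^{3})$ such that a translate $x+B$ is contained in $A+A+A$. This step is the heart of the argument and packages the entire Kelley-Meka/Bloom-Sisask machinery: the H\"older lifting of Lemma \ref{4.4}, the small-pseudorandom-implies-uniformity principle of Lemma \ref{4.5}, the dependent-random-choice proposition stated above, and the $L^{\infty}$-almost-periodicity with Bohr sets in Theorem \ref{4.7}, all iterated via density increment on regular Bohr sets.

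Finally I would glue Lemma \ref{4.2} to Lemma \ref{4.3}. The former gives
\[
|B|\geqslant(\rho/8)^{d}N'\geqslant\exp\bigl(-d\log(16/\rho)\bigr)N\geqslant\exp\bigl(-C'\log(2/\alpha)^{12}\bigr)N,
\]
while the latter produces an AP in $B$ of length $\gg|B|^{1/d}$. Translating by $x$ and using the no-wraparound observation, $A+A+A$ contains an AP of length at least
\[
\bigl(\exp(-C'\log(2/\alpha)^{12})N\bigr)^{1/d}\;\geqslant\;\exp\bigl(-C''\log(2/\alpha)^{3}\bigr)\,N^{c/\log(2/\alpha)^{9}},
\]
which is the claimed bound after absorbing constants into $C''$ and $c$.

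The main obstacle, unsurprisingly, is the Bogolyubov-Ruzsa extraction itself; once a Bohr set of rank polylogarithmic in $1/\alpha$ and reasonable width is known to sit inside $A+A+A$, the Bohr-to-AP passage via Lemma \ref{4.3} and the size bookkeeping via Lemma \ref{4.2} are essentially routine. Any quantitative improvement in either exponent of the final estimate would therefore have to come from sharpening the underlying uniformity/density-increment argument rather than this last combinatorial step.
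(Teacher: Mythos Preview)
Your proposal is correct and matches the paper's approach exactly: the paper simply records that the theorem follows by combining \cite[Theorem~5]{BS23} with Lemma~\ref{4.3}, which is precisely what you do (together with the necessary bookkeeping via Lemma~\ref{4.2} and the passage to a prime modulus). One small arithmetic point: in your final display it is cleaner to compute $|B|^{1/d}\geqslant(\rho/8)(N')^{1/d}$ directly rather than route through the intermediate bound $\exp(-C'\log(2/\alpha)^{12})N$, since raising the latter to the power $1/d$ would require a \emph{lower} bound $d\gg\log(2/\alpha)^{9}$ that you do not have.
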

 \subsection{Transference Principle in the Prime Case}
 
 In Green's paper\cite{G05} he created the influential transference principle to study the Roth's theorem in the primes. This was also applicable for other relative version of Roth's theorem and intepreted as the dense model theorem\cite[Section 9.4]{GTAC}. Briefly speaking, if we want to study the relative version on $S\subset \mathbb{Z}/N\mathbb{Z}$, we can find some "dense model" $B$ of $A\subset S$ in the sense that
 \begin{equation}
 	\frac{|B|}{N}\approx \frac{|A|}{|S|}=d_{S}(A).
 \end{equation} 
 From the Fourier prospective it is exactly to find an approximation
 \begin{equation}\label{(8)}
  \widehat{1_{B}}\approx\widehat{1_{A}},
 \end{equation}
 thus by {\color{red}(\ref{(3)})} we have 
 \[
 \Lambda(A)=\langle \widehat{1_{A}}^{2}, \widehat{1_{2A}}\rangle\approx\langle \widehat{1_{B}}^{2}, \widehat{1_{2B}}\rangle=\Lambda(B).
 \]
 So we may apply the Roth's theorem in the integers to obtain a bound for $\Lambda(A)$, as long as the error in the approximation is bounded. \par 
 However the approximatin as stated in {\color{red}(\ref{(8)})} is too much to hope for, since we are requesting\[
 |S|\geqslant|A|=\widehat{1_{A}}(0)\approx \widehat{1_{B}}(0)=|B|.
 \]
 This means $S$ itself should be a dense set, which is unpleasant to see. Take $S=\mathbb{P}_{N}:=\mathbb{P}\cap[N]$ for example, from $|B|\lessapprox|S|$ we are basically requesting the result $r_{3}(N)\ll 1/\log N$ from the transference principle, which is appearently harder before any result of Roth's theorem in the primes.\par 
 To get around this, we need to weight the function $1_{A}$ with some ``majorant" $\nu$, and establish the approximation $\widehat{1_{B}}\approx \widehat{f}$ for $f=1_{A}\nu$. Here we give some definitions.
 \begin{defi}[Majorant]
 	Given $S\subset[N]$, we define a majorant on $S$ to be a non-negative function $\nu:\mathbb{Z}\to[0,\infty)$ with $\text{supp}(\nu)\subset S$ s.t.\[
 	\sum\limits_{n}\nu(n)=(1+o(1))N.
 	\]
 \end{defi}
 \begin{defi}[Fourier decay]
 	We say a majorant $\nu$ on $[N]$ has Fourier decay of level $\theta$ if
 	\[
 	\|\widehat{\nu}-\widehat{1_{[N]}}\|_{\infty}\leqslant \theta N.
 	\]
 \end{defi}
 \begin{defi}[Restriction at $p$]
 		We say a majorant $\nu$ on $[N]$ satisfies a restriction estimate at exponent $p$ if
 		\[
 		\sup\limits_{|\phi|\leqslant\nu}\int_{\mathbb{T}}|\widehat{\phi}(\alpha)|^{p}\text{d}\alpha\ll_{p}\|\nu\|_{1}^{p}N^{-1}.
 		\]
 \end{defi}
 Green\cite{G05} essentially showed the following.
 \begin{thm}[Green, 2005]
 	Suppose that the majorant $\nu$ has Fourier decay of level
 	$\theta$ and satisfies a restriction estimate at exponent $p$. Then for any $0\leqslant f\leqslant \nu$ there exists
 	$0\leqslant g\ll 1_{[N]}$ s.t.
 	\[
 	\|\widehat{f}-\widehat{g}\|_{\infty}\ll_{p}\log (\theta^{-1})^{-\frac{1}{p+2}}N.
 	\]
 \end{thm}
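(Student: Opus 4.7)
The plan is to take $g$ to be a Bohr-smoothing of $f$ adapted to the ``large spectrum'' of $f$: the restriction hypothesis on $\nu$ limits how many frequencies of $f$ are large and hence how much ``structure'' the smoothing kernel must carry, while the Fourier-decay hypothesis is what converts the pointwise bound $f\le\nu$ into a pointwise bound of the form $g\ll 1_{[N]}$. Concretely, fix a threshold $\eta\in(0,1)$ and a width $\rho\in(0,1)$ (both to be optimized at the end), set
\[
\Lambda := \{\alpha \in \mathbb{T}: |\widehat{f}(\alpha)| \ge \eta N\},
\]
and form the Bohr set $B := \text{Bohr}(\Lambda,\rho)$. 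The restriction estimate at exponent $p$ applied to a maximally $N^{-1}$-separated net in $\Lambda$ yields $|\Lambda|\ll_p \eta^{-p}$, while Lemma 4.2 gives $|B|\ge(\rho/8)^{|\Lambda|}N$ together with $|\widehat{\mu_B}(\alpha)-1|\le \rho$ for every $\alpha\in\Lambda$. Take $g := f\ast\mu_B$.

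The Fourier approximation is then immediate from $\widehat{g} = \widehat{f}\,\widehat{\mu_B}$: splitting $\widehat{f}-\widehat{g} = \widehat{f}(1-\widehat{\mu_B})$ into the cases $\alpha\in\Lambda$ and $\alpha\notin\Lambda$ yields
\[
\|\widehat{f}-\widehat{g}\|_\infty \;\le\; N\max(\rho,\,2\eta).
\]
For the pointwise bound, $f\le\nu$ gives $g\le\nu\ast\mu_B = 1_{[N]}\ast\mu_B + (\nu-1_{[N]})\ast\mu_B$, and the first summand is trivially at most $1$ everywhere, being a local average of an indicator. The remaining error $(\nu-1_{[N]})\ast\mu_B$ must be shown $O(1)$ pointwise; expanding as an exponential sum on the Fourier side, one pairs the hypothesis $\|\widehat{\nu}-\widehat{1_{[N]}}\|_\infty\le\theta N$ against an $L^q$-estimate on $\widehat{\mu_B}$ coming from the restriction hypothesis itself, now applied with $\mu_B$ in place of $f$. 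A Hölder interpolation then produces a pointwise error of the form $\theta^c$ multiplied by a bounded power of $\rho^{-1}$ and $|\Lambda|$.

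Balancing the parameters: set $\rho\asymp\eta$ so the two Fourier errors match; forcing the pointwise error in the previous step to be $O(1)$ relates $\theta$, $\rho$, and $|\Lambda|$; combining with $|\Lambda|\ll\eta^{-p}$ and optimizing yields $\eta\asymp (\log\theta^{-1})^{-1/(p+2)}$, the exponent $p+2$ being precisely what emerges from the Hölder pair used in the pointwise step (two additional degrees on top of the $p$ coming from restriction). The main obstacle is exactly that pointwise bound: the naive Cauchy--Schwarz estimate $\|\widehat{\mu_B}\|_1\ll N|B|^{-1/2}$ is far too weak to absorb the factor $\theta N$, and one must deploy the restriction hypothesis a \emph{second} time as a nontrivial $L^q$-bound on the Fourier transform of the smoothing kernel to close the loop. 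This is the only place in the proof where both hypotheses interact rather than contribute separately, and it is what forces the final logarithmic rate of the theorem.
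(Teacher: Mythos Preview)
Your architecture matches the paper's sketch: smooth $f$ by a Bohr kernel whose frequency set is the large spectrum $\mathrm{Spec}(f,\delta)$, bound the size of that spectrum via the restriction estimate, and use the Fourier decay of $\nu$ for the pointwise control of $g$. But the paper (following Green) takes $g=f\ast\beta\ast\beta$ with $\beta=\mu_B$, a \emph{double} convolution, whereas you take $g=f\ast\mu_B$.

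This difference is exactly the missing idea at the place you yourself flag as the obstacle. With the squared kernel one has $\widehat{\beta\ast\beta}=|\widehat{\beta}|^{2}\ge 0$, so
\[
\bigl|(\nu-1_{[N]})\ast\beta\ast\beta(x)\bigr|\;\le\;\|\widehat{\nu}-\widehat{1_{[N]}}\|_{\infty}\sum_{r}|\widehat{\beta}(r)|^{2},
\]
and the sum on the right is computed \emph{exactly} by Parseval as $\|\beta\|_{2}^{2}$, an explicit function of $|B|$ alone. There is no Cauchy--Schwarz loss and no need for a second appeal to restriction; demanding that this error be $O(1)$ becomes a lower bound on $|B|$ in terms of $\theta$, and combining with the Bohr-set size estimate and $|\Lambda|\ll_{p}\eta^{-p}$ is what drives the final optimization.

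Your proposed rescue --- invoke the restriction estimate ``with $\mu_{B}$ in place of $f$'' --- does not work as stated: the restriction hypothesis is only available for functions with $|\phi|\le\nu$, and $\mu_{B}$ (which takes the value $|G|/|B|$ on $B$ and need not even be supported in $S$) has no reason to satisfy this. So the single-convolution route cannot be closed under the given hypotheses, and squaring the kernel is the substantive input you are missing.
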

 In terms of $S=\mathbb{P}_{N}$ he chose a majorant taking values some propotion of $\log{p}$ with prime $p$ restricted on some AP to a large modulus, say $m=\prod_{p\leqslant W}p$ for some $W\asymp \log\log N$. This is the so-called ``W-trick", making the primes more uniform in the sense that $\sup\limits_{\xi\neq 0}|\widehat{\nu}(\xi)|$ is small, which aims to control $g$ so that it is really ``set-like". Then he chose $g=f\ast\beta\ast\beta$, where $\beta=1_{B}/|B|$ for some Bohr set $B\subset\mathbb{Z}/N\mathbb{Z}$ with frequency set being the large spectrum
 \[
 \Gamma=\text{Spec}(f,\delta):=\{r:\arrowvert\widehat{f}(r)\arrowvert\geqslant\delta\}
 \]
 for some $\delta>0$. Here through the canonical isomorphism between dual groups $B$ can be written as \[B=\{x:\left\|\frac{xr}{N}\right\|_{\mathbb{R}/\mathbb{Z}}\leqslant\epsilon,\forall r\in\Gamma\}\]
 for some $\epsilon\in(0,1)$.\par
 It should be noted that the large spectrum is a nice candidate for the frequency set, since we can easily control a function by extracting the large Fourier coefficients. Bohr sets thus defined also share nice additive properties(see \cite[Proposition 4.39, 4.40]{AC}, for example).\par 
 As for the restriction estimate, Green built the following\cite[Theorem 1.5]{G05}.
 \begin{thm}[Hardy-Littlewood majorant property]\label{4.14}
 	Suppose $p\geqslant 2$ is a real number, $\{a_{n}\}_{n\in\mathbb{P}_{N}}$ be a sequence of complex numbers with $|a_{n}|\leqslant 1$ for all $n$. Then
 	\[
 	\left\|\sum\limits_{n\in\mathbb{P}_{N}}a_{n}e(n\theta)\right\|_{L^{p}(\mathbb{T})}\ll_{p}\left\|\sum\limits_{n\in\mathbb{P}_{N}}e(n\theta)\right\|_{L^{p}(\mathbb{T})}.
 	\]
 \end{thm}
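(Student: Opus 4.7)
The plan is to dispose of the case of even integer $p$ by a direct orthogonality argument, and then handle general real $p\geq 2$ via the restriction theorem for the primes, which is the main technical ingredient.

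Write $E(\theta):=\sum_{n\in\mathbb{P}_N}a_ne(n\theta)$ and $G(\theta):=\sum_{n\in\mathbb{P}_N}e(n\theta)$. For $p=2k$ an even positive integer, I would expand $|E|^{2k}=E^k\overline{E}^k$ and integrate; by the orthogonality $\int_{\mathbb{T}}e(m\theta)\,d\theta=\delta_{m,0}$, only the $2k$-tuples $(n_i,m_j)\in\mathbb{P}_N^{2k}$ satisfying $n_1+\cdots+n_k=m_1+\cdots+m_k$ contribute, so
\[
\|E\|_{L^{2k}(\mathbb{T})}^{2k}=\sum_{\substack{n_1,\ldots,n_k,m_1,\ldots,m_k\in\mathbb{P}_N\\ n_1+\cdots+n_k=m_1+\cdots+m_k}}a_{n_1}\cdots a_{n_k}\,\overline{a_{m_1}\cdots a_{m_k}}.
\]
Under $|a_n|\leq 1$, the absolute value of the right side is at most the number of such representations, which equals $\|G\|_{L^{2k}(\mathbb{T})}^{2k}$; hence $\|E\|_{2k}\leq\|G\|_{2k}$ with constant $1$. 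The case $p=2$ is the special case $k=1$, and also follows immediately from Parseval.

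For general real $p\geq 2$, the strategy is to combine an upper bound on $\|E\|_{L^p}$ with a matching lower bound on $\|G\|_{L^p}$, both of the same order of magnitude $F_p(N)$. The upper bound is furnished by the Bourgain--Green restriction theorem for the primes; the matching lower bound $\|G\|_{L^p}\gg_p F_p(N)$ is obtained by restricting the defining integral to the major arcs --- small neighbourhoods of rationals with bounded denominator --- on which $|G(\theta)|$ is large thanks to the prime number theorem in arithmetic progressions. Combining the two immediately yields $\|E\|_p\ll_p\|G\|_p$.

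The main obstacle is the restriction theorem itself. One decomposes $\mathbb{T}=\mathfrak{M}\cup\mathfrak{m}$ into major and minor arcs; on $\mathfrak{M}$ the Siegel--Walfisz theorem supplies an asymptotic for $E$, while on $\mathfrak{m}$ one requires a Vinogradov-type bound (most cleanly handled via Vaughan's identity, or alternatively by employing a Ramar\'e--Selberg enveloping sieve majorising $1_{\mathbb{P}_N}$) exhibiting non-trivial cancellation in the exponential sum over primes. This minor-arc control reflects the multiplicative structure specific to $\mathbb{P}_N$; for a generic subset of $[N]$ of comparable density the analogous Hardy--Littlewood majorant inequality is known to fail (Bachelis 1973), and it is precisely this arithmetic input that makes Theorem \ref{4.14} go through.
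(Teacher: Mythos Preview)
The paper does not actually prove Theorem~4.14; it is merely stated and attributed to Green~\cite{G05}, with a one-sentence remark afterwards. There is therefore no in-paper proof to compare your proposal against, so let me comment on the proposal itself relative to Green's original argument.

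Your overall architecture is correct and is essentially Green's: the even-integer case is indeed immediate from orthogonality with constant $1$, and for general real $p>2$ one matches an upper bound for $\|E\|_p$ coming from a restriction estimate against a lower bound for $\|G\|_p$ coming from the major-arc contribution (prime number theorem in progressions). Combining the two gives the theorem.

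One correction to your sketch of the restriction step. You present the major/minor arc decomposition of $\int_{\mathbb{T}}|E|^p$ with Siegel--Walfisz on $\mathfrak{M}$ and Vinogradov/Vaughan on $\mathfrak{m}$ as the primary route, with the enveloping sieve mentioned only parenthetically as an alternative. In Green's paper it is the other way round: the restriction estimate is obtained by a Tomas--Stein ($TT^\ast$) type argument applied to a sieve majorant $\beta\geqslant 1_{\mathbb{P}_N}$ (a Selberg-type upper-bound sieve in the spirit of Ramar\'e--Ruzsa), the arithmetic input being the Fourier decay of $\beta$ rather than pointwise minor-arc bounds for $E$. A direct circle-method estimate of $\int_{\mathfrak{m}}|E|^p$ via Vinogradov-type bounds would not by itself close for $p$ just above $2$, because one only has $\sup_{\mathfrak{m}}|E|$ saving a small power of $\log$, which is not enough to beat the $L^2$ mass on $\mathfrak{m}$. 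So your ``alternative'' is in fact the main engine; the rest of your outline is fine.
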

 He pointed out that this property holds not only for primes. In fact the general version of this implies some important Fourier restriction conjectures and thus relates to Kakeya problems, too. A generalization of Theorem \ref{4.14} is \cite[Theorem 1.12]{LD}, which covers the Piatetski–Shapiro primes.\par 
 We shall also remark that due to the application of W-trick the resulted estimate would be poor. This seems to be the main flaw of transference principle when we need to uniformize the weighted characteristic function.
 \subsection{Polynomial Method in the Finite Field Case}
 	In 2009, Dvir\cite{kkf} first introduced the polynomial method to prove the analogy of Kakeya conjecture on finite fields.
 \begin{thm}[Dvir, 2009]
 	Suppose $K\subset\mathbb{F}_{q}^{n}$ is a Kakeya set, i.e. $\forall x\in\mathbb{F}_{q}^{n},\exists y\in\mathbb{F}_{q}^{n}$ s.t. $L_{y,x}:=\{y+a\cdot x:a\in\mathbb{F}_{q}\}\subset K$. Then $|K| \gg_{n} q^{n}.$
 \end{thm}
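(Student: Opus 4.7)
My plan is to follow the now-classical polynomial method argument: assume $|K|$ is small, interpolate a low-degree polynomial vanishing on $K$, and derive a contradiction by analyzing its top-degree part on the many lines contained in $K$.

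First I would set up the parameter count. The space $V_{d}$ of polynomials in $\mathbb{F}_{q}[X_{1},\dots,X_{n}]$ of total degree at most $d$ has dimension $\binom{n+d}{n}$. Choosing $d=q-1$, if we assume for contradiction that $|K|<\binom{n+q-1}{n}$, then the evaluation map $V_{q-1}\to\mathbb{F}_{q}^{K}$ sending $P$ to $(P(k))_{k\in K}$ has nontrivial kernel by a dimension count. Hence there exists a nonzero $P\in\mathbb{F}_{q}[X_{1},\dots,X_{n}]$ of degree $d\le q-1$ with $P|_{K}\equiv 0$.

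Next I would use the Kakeya hypothesis to extract information about the top-degree part of $P$. Write $P=\sum_{i=0}^{d}P_{i}$ with $P_{i}$ homogeneous of degree $i$, and $P_{d}\not\equiv 0$. Fix a direction $x\in\mathbb{F}_{q}^{n}$ and let $y$ be such that $L_{y,x}\subset K$. Then the univariate polynomial
\[
Q(a):=P(y+ax)=\sum_{i=0}^{d}P_{i}(y+ax)
\]
has degree at most $d\le q-1$ in $a$ and vanishes for all $a\in\mathbb{F}_{q}$, so $Q\equiv 0$ as a polynomial. Extracting the coefficient of $a^{d}$, only the term $P_{d}(y+ax)$ contributes to degree $d$, and in fact the $a^{d}$-coefficient of $P_{d}(y+ax)$ is exactly $P_{d}(x)$ by homogeneity. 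Hence $P_{d}(x)=0$ for every $x\in\mathbb{F}_{q}^{n}$.

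The final step is to note that a nonzero polynomial of degree at most $q-1$ in $n$ variables cannot vanish identically on $\mathbb{F}_{q}^{n}$ (a standard consequence of the combinatorial Nullstellensatz, or a direct induction on $n$ using that a nonzero univariate polynomial of degree less than $q$ has fewer than $q$ roots). This contradicts $P_{d}\not\equiv 0$, so our hypothesis must fail and
\[
|K|\ge \binom{n+q-1}{n}\ge\frac{q^{n}}{n!},
\]
which gives $|K|\gg_{n}q^{n}$ as claimed. The only delicate point is making sure the degree count in the line-restriction step actually forces $Q\equiv 0$, which is why we chose $d\le q-1$; the rest is linear algebra and a standard vanishing lemma, so I expect no serious obstacle.
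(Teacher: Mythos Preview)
Your proposal is correct and follows precisely the same polynomial-method argument the paper sketches: interpolate a nonzero polynomial of degree at most $q-1$ vanishing on $K$ via the dimension count $|K|<\binom{q+n-1}{n}$, restrict to each line $L_{y,x}\subset K$ to force the top homogeneous part $P_{d}$ to vanish at every direction $x$, and then invoke the standard nonvanishing lemma for polynomials of degree less than $q$ over $\mathbb{F}_{q}^{n}$ to obtain a contradiction. Your write-up simply fills in the details (the homogeneous decomposition and the $a^{d}$-coefficient extraction) that the paper leaves implicit.
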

 He actually proved that $|K|\geqslant\binom{q+n-1}{n}=|M_{n}|$, where $M_{n}=\{\text{monomials in }\mathbb{F}_{q}[x_{1},...,x_{n}], \deg\leqslant q-1\}.$ For any field $\mathbb{F}$, by viewing polynomials of $n$ variables over $\mathbb{F}$ as functions on $\mathbb{F}^{n}$, we know the following fundamental fact.\\
\textbf{Fact. }Suppose $S\subset\mathbb{F}^{n}$ s.t. $|S|<\dim{V}$, where $V$ is a linear subspace of polynomials with $ \deg\leqslant d$ over $\mathbb{F}$, then $\exists g\in V\,\backslash\{0\}$ s.t. $g$ vanishes on $S$.

 Here we can use the fact by taking  $\mathbb{F}=\mathbb{F}_{q},	S=K, V=S_{n}=\text{Span}_{\mathbb{F}_{q}}M_{n}$, then $|K|<|M_{n}|=\dim{S_{n}}$ forces a $0\neq g\in S_{n}$ vanishing on $K$, thus vanishing on every $L_{y,x}$, from which we can deduce a contradiction that $g=0.$\par
 	Combining the fact above and the following lemma, it is enough to prove $R_{3}(\mathbb{F}_{q}^{n})\ll c^{n}$ for some $c<q.$
 \begin{lmm}$\mathrm{\cite[Proposition\ 2]{EG}}$
 	Define $M_{d}:=\{\text{monomials in }\mathbb{F}_{q}[x_{1},...,x_{n}], \deg\leqslant d$ and each variable has $\deg\leqslant q-1\},S_{d}:=\text{Span}_{\mathbb{F}_{q}}M_{d},m_{d}:=|M_{d}|=\dim S_{d}.$\par
 	Suppose $A\subset\mathbb{F}_{q}^{n},\alpha,\beta,\gamma\in\mathbb{F}_{q}$ s.t. $\alpha+\beta+\gamma=0.$ $P\in S_{d}$  s.t. $P(\alpha a+\beta b)=0$ for $\forall a,b\in A,a\neq b.$ Then $|c\in A: P(-\gamma c)\neq 0|\leqslant 2m_{d/2}.$
 \end{lmm}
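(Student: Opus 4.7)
The plan is to follow the Croot--Lev--Pach rank-argument paradigm: read the hypothesis as saying that a certain $|A|\times|A|$ matrix is diagonal, and upper bound its rank by decomposing the bivariate polynomial $P(\alpha a+\beta b)$ into a short sum of products (polynomial in $a$)(polynomial in $b$). First I would introduce the matrix $M$ with entries $M_{a,b}:=P(\alpha a+\beta b)$ for $a,b\in A$. The hypothesis then reads ``$M_{a,b}=0$ for $a\neq b$'', so $M$ is diagonal, and since $\alpha+\beta=-\gamma$ its diagonal entry at $a$ is $P(-\gamma a)$. Consequently
\[
|\{c\in A:P(-\gamma c)\neq 0\}|=\mathrm{rank}(M),
\]
and it suffices to prove $\mathrm{rank}(M)\leqslant 2m_{d/2}$.

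Next I would produce the decomposition. Take any monomial $\prod_i x_i^{e_i}$ appearing in $P$; since $P\in S_d$ we have $|e|:=\sum_i e_i\leqslant d$ and $e_i\leqslant q-1$. Expanding $\prod_i(\alpha a_i+\beta b_i)^{e_i}$ by the binomial theorem produces terms of the form $a^f b^g$ with $f_i+g_i=e_i\leqslant q-1$ and $|f|+|g|=|e|\leqslant d$. By pigeonhole either $|f|\leqslant d/2$ or $|g|\leqslant d/2$, and the per-variable bound $f_i,g_i\leqslant q-1$ guarantees that the corresponding factor $a^f$ or $b^g$ actually lies in $M_{d/2}$. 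Sorting each such term into one of the two cases (with any fixed tie-breaking rule when $|f|=|g|=d/2$) and collecting coefficients yields a representation
\[
P(\alpha a+\beta b)=\sum_{\mu\in M_{d/2}}\mu(a)\,F_\mu(b)+\sum_{\nu\in M_{d/2}}G_\nu(a)\,\nu(b)
\]
for suitable polynomials $F_\mu,G_\nu\in\mathbb{F}_q[x_1,\dots,x_n]$.

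Each of the at most $2m_{d/2}$ summands above, restricted to $A\times A$, is an outer product of a function on $A$ with another function on $A$, hence contributes a rank-one matrix; therefore $\mathrm{rank}(M)\leqslant 2m_{d/2}$, and combining with the first step closes the argument. The main obstacle, such as it is, is purely bookkeeping: one has to pick the tie-breaking rule consistently and verify that the per-variable degree cap $q-1$ inherited from the binomial expansion keeps the ``low'' factor inside $M_{d/2}$ rather than in a larger monomial set. Both checks are immediate once spelled out, but they are precisely the points where the hypotheses ``$P\in S_d$'' and ``each variable has degree $\leqslant q-1$'' genuinely enter.
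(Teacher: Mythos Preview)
Your argument is correct and is exactly the Croot--Lev--Pach/Ellenberg--Gijswijt rank method: the hypothesis makes the $|A|\times|A|$ matrix $M_{a,b}=P(\alpha a+\beta b)$ diagonal with diagonal entries $P(-\gamma a)$, and the binomial expansion together with the pigeonhole $|f|\leqslant d/2$ or $|g|\leqslant d/2$ gives a decomposition into at most $2m_{d/2}$ rank-one summands. The bookkeeping you flag (tie-breaking and the per-variable cap $q-1$) is handled correctly.

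Note, however, that the paper does not actually prove this lemma. It is stated with a citation to Ellenberg--Gijswijt, and the paragraph that follows it is not a proof of the lemma but rather a sketch of how to \emph{apply} the lemma (together with the preceding ``Fact'' about polynomials vanishing on small sets) to deduce $R_{3}(\mathbb{F}_{q}^{n})\ll c^{n}$: one takes $V=\{P\in S_{d}:P\text{ vanishes on }\mathbb{F}_{q}^{n}\backslash(-\gamma A)\}$ and uses the Fact to produce a suitable $P$. So there is no in-paper proof to compare against; your write-up is the standard argument and would serve as a self-contained proof where the paper currently just cites one.
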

 The solution-free condition can be intepreted as $S(A)\cap (-\gamma A)=\phi$, where $S(A)=\{\alpha a+\beta b:a,b\in A,a\neq b\}.$ It now suffices to consider $V=\{P\in S_{d}:P\text{ vanishes on }\mathbb{F}_{q}^{n}\backslash(-\gamma A)\}$, $S$ be a maximal support of any $P\in V$ and apply the fact above.
 \section{\textsc{Prospective Work}}
 In this section the author will list two of his prospective work to do in Roth-type problems. Although few breakthroughs of these two problems have occured in recent years, both of their results are very weak compared to many similar problems. The author believes the tools on hand is sufficient for improvement even if it is small, and he is eager for any new ideas and would appreciate any suggestions.
\subsection{Relax the Conditions in Multidimensional Linear Roth's theorem}\label{5.1}
 In Theorem \ref{3.6}, condition (b) requries the coefficient matrix of our concern to have the large rank, which is obviously too strict for the truth.  \par 
 As we remarked, condition (b) excludes the case $k$-AP for all $k\geqslant 4$ with the corresponding matrix satisfying $l=n-2$, only allowed when $n=3$ in Theorem \ref{3.6}. Even in 1975 Szemer\'{e}di\cite{S75} solved $k$-AP for all $k$ the only improvement to Theorem \ref{3.6} since 1954\cite{R54}, as the author know, is still due to Roth\cite{R70} and \cite{R72} in 1970 and 1972 repectively. In those two papers relaxations to the rank condition were small, only including $n=2l$, thus allowing 4-AP. Few attention has been attracted to this version of Roth's theorem since then, probably because on the one hand, different types of linear system vary a lot in both difficulty and methodology. On the other, only certain types are of interest and we do not need the most general result. \par 
 Nevertheless, the author conjectures that we can totally remove condition (b) in Theorem \ref{3.6}. Although we cannot expect any good quantative bound, in view of that the best bound for $r_{k}(N)(k\geqslant 5)$ currently is $\exp(-(\log\log N)^{c_{k}})$, we will be glad to see how much we can relax the condition such that $\overline{d}(A)=0$ for $A$ free of non-trivial solution to a linear system.\par 
 The author has tried to test the adaptability of density increment strategy, which seems to be the most promising approach to linear Roth's theorems now. However there is a density gap here. Following the notation of Theorem \ref{3.6}, to count the number of solutions we need to first set our ambient group as $G=(\mathbb{Z}/N\mathbb{Z})^{l}$, then denote by $\Delta\leqslant G$ the diagonal subgroup of $G$, i.e., with $l$ coordinates equal. More generally we will define the diagonal set of $X\subset \mathbb{Z}/N\mathbb{Z}$ to be 
 \[
 \Delta(X):=\{(x,...,x):x\in X\}\subset \Delta.
 \]
Thus the linear system is transformed into a single linear equation
 \begin{equation}\label{(9)}
 \mathbf{c}_{1}\cdot\mathbf{x}_{1}+\cdots \mathbf{c}_{n}\cdot\mathbf{x}_{n}=0,
 \end{equation}
where $ \mathbf{c}_{i}\in G\backslash\{0\}$ while $\mathbf{x}_{i}\in \Delta$ for all $1\leqslant i\leqslant n$, and the dot multiplication is the common inner product by coordinate. The number of solutions in $A$ to the original system then equals the number of solutions in $\Delta(A)$ to {\color{red}(\ref{(9)})}, which is 
\begin{equation}\label{(10)}
\Lambda(A)=\langle1_{\mathbf{c}_{1}\cdot\Delta(A)}\ast\cdots\ast 1_{\mathbf{c}_{n-1}\cdot\Delta(A)},1_{\mathbf{c}_{n}\cdot \Delta(A)}\rangle,
\end{equation}
where the convolution and the inner product are both taken globally. In \cite{R54} Roth just transformed {\color{red}(\ref{(10)})} into the Fourier space and estimated the integral of exponential sums as usual. However to apply the density increment strategy, we need to tackle with the diagonal sets dilated with different coefficient $\mathbf{c}_{i}$, forcing us to consider the relative problem in $\Delta$ with a large density gap $d_{G}(\Delta)=|G|^{-\frac{l-1}{l}}$. Note that we are unable to start the density increment in $\Delta$ since any global version of related crucial lemmas usually ask for translation invariance, while $\Delta$ certainly does not satisfy since any translation of $\mathbf{x}\in G\backslash\Delta$ will break this. Of course it is also difficult to find some dense model since the distribution of $\Delta$ is too imbalanced. It even intersects with an AP at more than one point only if the AP is contained in itself. To get around it we have to either find some other approaches designed for the diagonal sets, or deduce from the global information {\color{red}(\ref{(10)})} more effective local information of $A$.
 \subsection{Improve Roth's Theorem in the Squares}
 In non-linear equations, we especailly care about the Roth-type problems in quadratic equations. The homogeneous one is
 \begin{equation}\label{(11)}
 \sum\limits_{i=1}^{s} c_{i}x_{i}^{2}=0.
 \end{equation}
 In higher degrees we usually need more variables, to be specific in \cite{Ch} Chow proved for general homogeneous equation in degree $d$ and $s\geqslant d+1$ with a four-log bound. While the best known result of equation {\color{red}(\ref{(11)})} is still due to Browning and Prendiville\cite{BP17} in 2017, with $s\geqslant 5$ and a three-log bound. Despite the same scale of density loss as in \ref{5.1}, transference principle is well applied here.\par 
 The fatal difference between non-linear equations and linear ones is the translation invarinace. In \cite{H15} Henriot proved that a bound of $1/(\log N)^{c}$ can be derived by consider the system
 \[
  \sum\limits_{i=1}^{s} c_{i}x_{i}^{2}=0, \sum\limits_{i=1}^{s} c_{i}x_{i}=0.
 \]
 Thus he could combine the transference principle with density increment strategy to get around the poor estimate caused by the W-trick. To achieve the same effect in {\color{red}(\ref{(11)})}, there should be more structural results of $A^{2}$ for some $A\subset[N]$, say finding a long AP in $A^{2}-A^{2}$. If there is, the large gap between the current three-log bound and the true magnitude could be bridged soon. 
	\bibliography{books}

\begin{thebibliography}{10}

\bibitem{BK12}
M.~Bateman and N.~Katz.
\newblock New bounds on cap sets.
\newblock 25(2):585--613, 2011.

\bibitem{B46}
F.~A. Behrend.
\newblock On sets of integers which contain no three terms in arithmetical
  progression.
\newblock {\em Proc. Nat. Acad. Sci. U.S.A.}, 32:331--332, 1946.

\bibitem{BL}
V.~Bergelson and A.~Leibman.
\newblock Polynomial extensions of van der {W}aerden’s and
  {S}zemer\'{e}di’s theorems.
\newblock {\em J. of the American Math. Soc.}, 9:729--753, 1996.

\bibitem{B12}
T.~F. Bloom.
\newblock Translation invariant equations and the method of {S}anders.
\newblock {\em Bull London Math. Soc.}, 44:1050--1067, 2012.

\bibitem{BS21}
T.~F. Bloom and O.~Sisask.
\newblock Breaking the logarithmic barrier in {R}oth's theorem on arithmetic
  progressions.
\newblock \texttt{arXiv:2007.03528}, 2021.

\bibitem{BS23}
T.~F. Bloom and O.~Sisask.
\newblock The {K}elley–{M}eka bounds for sets free of three-term arithmetic
  progressions.
\newblock {\em Essential Number Theory}, 2(1):15–44, 2023.

\bibitem{B99}
J.~Bourgain.
\newblock On triples in arithmetic progression.
\newblock {\em Geom. Funct. Anal.}, 9:968--984, 1999.

\bibitem{BB82}
T.~Brown and J.~Buhler.
\newblock A density version of a geometric ramsey theorem.
\newblock {\em J. Combin. Theory, Ser. A}, 32(1):20--34, 1982.

\bibitem{BP17}
T.~D. Browning and S.~M. Prendiville.
\newblock A transference approach to a {R}oth-type theorem in the squares.
\newblock {\em Int. Math. Res. Not.}, 2017(7):2219--2248, 2016.

\bibitem{Ch}
S.~Chow.
\newblock Roth-waring-goldbach.
\newblock {\em International Mathematics Research Notices}, 2018(8):2341 –
  2374, 2018.

\bibitem{CLP}
E.~Croot, V.~F. Lev, and P.~P. Pach.
\newblock Progression-free sets in $\mathbb{Z}_{4}^{n}$ are exponentially
  small.
\newblock {\em Annals of Math.}, 185:331--337, 2017.

\bibitem{CS10}
E.~S. Croot and O.~Sisask.
\newblock A probabilistic technique for finding almost-periods of convolutions.
\newblock {\em Geom. Funct. Anal.}, 20(6):1367--1396, 2010.

\bibitem{LD}
L.~Daskalakis.
\newblock Roth's theorem and the {H}ardy–{L}ittlewood majorant problem for
  thin subsets of primes.
\newblock {\em Journal of Functional Analysis}, 286(5):110291, 2024.

\bibitem{kk2}
R.~Davies.
\newblock Some remarks on the {K}akeya problem.
\newblock {\em Proc. Cambridge Philos. Soc.}, 69(3):417--421, 1971.

\bibitem{kkf}
Z.~Dvir.
\newblock On the size of {K}akeya sets in finite fields.
\newblock {\em J. of the American Math. Soc.}, 22(4):1093–1097, 2008.

\bibitem{EG}
J.~S. Ellenberg and D.~Gijswijt.
\newblock On large subsets of $\mathbb{F}_{q}^{n}$ with no three-term
  arithmetic progression.
\newblock {\em Annals of Math.}, 185:339--343, 2017.

\bibitem{F77}
H.~Furstenberg.
\newblock Ergodic behavior of diagonal measures and a theorem of
  {S}zemer{\'e}di on arithmetic progressions.
\newblock {\em Journal d’Analyse Math{\'e}matique}, 31:204--256, 1977.

\bibitem{G01}
W.~T. Gowers.
\newblock A new proof of {S}zemer\'{e}di’s theorem.
\newblock {\em Geom. Funct. Anal.}, 11:465--588, 2001.

\bibitem{PFR}
W.~T. Gowers, B.~Green, F.~Manners, and T.~Tao.
\newblock On a conjecture of {M}arton.
\newblock \texttt{arXiv:2311.05762}, 2023.

\bibitem{ID}
B.~Green.
\newblock \textit{Additive Combinatorics}.
\newblock
  \url{https://people.maths.ox.ac.uk/greenbj/papers/additive-combinatorics.pdf}.

\bibitem{G05}
B.~Green.
\newblock Roth's theorem in the primes.
\newblock {\em Annals of Math.}, 161:1609--1636, 2005.

\bibitem{GT05}
B.~Green and T.~Tao.
\newblock New bounds for {S}zemer\'{e}di’s theorem,
  \uppercase\expandafter{\romannumeral2}: A new bound for $r_{4}(\textit{N})$.
\newblock \texttt{arXiv:math/0610604}, 2005.

\bibitem{GT08}
B.~Green and T.~Tao.
\newblock The primes contain arbitrarily long arithmetic progressions.
\newblock {\em Annals of Math.}, 167:481--547, 2008.

\bibitem{GT17}
B.~Green and T.~Tao.
\newblock New bounds for {S}zemer\'{e}di’s theorem,
  \uppercase\expandafter{\romannumeral3}: a polylogarithmic bound for
  $r_{4}(\textit{N})$.
\newblock {\em Mathematika}, 63:944--1040, 2017.

\bibitem{HB87}
D.~R. Heath-Brown.
\newblock Integer sets containing no arithmetic progressions.
\newblock {\em J. London Math. Soc.}, s2-35:385--394, 1987.

\bibitem{H15}
K.~Henriot.
\newblock Logarithmic bounds for translation-invariant equations in squares.
\newblock {\em Int. Math. Research Not.}, 2015:12540--12562, 2015.

\bibitem{KM}
Z.~Kelley and R.~Meka.
\newblock Strong bounds for 3-progressions.
\newblock \texttt{arXiv:2302.05537}, 2024.

\bibitem{LSS}
J.~Leng, A.~Sah, and M.~Sawhney.
\newblock Improved bounds for {S}zemer\'{e}di's theorem.
\newblock \texttt{arXiv:2402.17995}, 2024.

\bibitem{L04}
V.~F. Lev.
\newblock Progression-free sets in finite abelian groups.
\newblock {\em J. Number Theory}, 104(1):162--169, 2004.

\bibitem{M95}
R.~Meshulam.
\newblock On subsets of finite abelian groups with no 3-term arithmetic
  progressions.
\newblock {\em J. Combin. Theory, Ser. A}, 71(1):168--172, 1995.

\bibitem{PS20}
S.~Peluse.
\newblock Bounds for sets with no polynomial progressions.
\newblock {\em Forum Math., Pi}, 8, 2020.

\bibitem{PP21}
S.~Peluse and S.~Prendiville.
\newblock A polylogarithmic bound in the nonlinear {R}oth theorem.
\newblock \texttt{arXiv:2003.04122}, 2021.

\bibitem{PPS24}
S.~Peluse, S.~Prendiville, and X.~Shao.
\newblock Bounds in a popular multidimensional nonlinear {R}oth theorem.
\newblock \texttt{arXiv:2407.08338}, 2024.

\bibitem{PC22}
C.~Pilatte.
\newblock New bound for {R}oth's theorem with generalized coefficients.
\newblock {\em Discrete Analysis}, 16, 2022.

\bibitem{R53}
K.~F. Roth.
\newblock On certain sets of integers.
\newblock {\em J. London Math. Soc.}, 28:104--109, 1953.

\bibitem{R54}
K.~F. Roth.
\newblock On certain sets of integers(\uppercase\expandafter{\romannumeral2}).
\newblock {\em J. London Math. Soc.}, 29:20--26, 1953.

\bibitem{R70}
K.~F. Roth.
\newblock Irregularities of sequences relative to arithmetic progressions
  \uppercase\expandafter{\romannumeral3}.
\newblock {\em J. Number Theory}, 2:125--142, 1970.

\bibitem{R72}
K.~F. Roth.
\newblock Irregularities of sequences relative to arithmetic progressions
  \uppercase\expandafter{\romannumeral4}.
\newblock {\em Period. Math. Hungar.}, 2:301--326, 1972.

\bibitem{BR}
T.~Sanders.
\newblock On the {B}ogolyubov–{R}uzsa lemma.
\newblock {\em Analysis and PDE}, 5(3):627–655, 2012.

\bibitem{SS16}
T.~Schoen and O.~Sisask.
\newblock Roth's theorem for four variables and additive structures in sums of
  sparse sets.
\newblock {\em Forum Math., Sigma}, 4, 2016.

\bibitem{S75}
E.~Szemer\'{e}di.
\newblock On sets of integers containing no $k$ elements in arithmetic
  progression.
\newblock {\em Acta Arith.}, 27:199--245, 1975.

\bibitem{AC}
T.~Tao and V.~Vu.
\newblock {\em Additive Combinatorics}.
\newblock Cambridge University Press, 2006.

\bibitem{kk3}
H.~Wang and J.~Zahl.
\newblock Volume estimates for unions of convex sets, and the {K}akeya set
  conjecture in three dimensions.
\newblock \texttt{arXiv:2502.17655}, 2025.

\bibitem{GTAC}
Y.~Zhao.
\newblock {\em Graph Theory and Additive Combinatorics}.
\newblock Cambridge University Press, 2023.
\newblock \url{https://yufeizhao.com/gtacbook/gtacbook.pdf}.

\end{thebibliography}
	\vspace{2ex}
	
	\textsc{School of Mathematical Sciences, University of Science and Technology of China, Hefei 230026, China}\par
	\textit{Email Address}: \href{mailto:llcwwzhang@mail.ustc.edu.cn}{\color{black}\texttt{llcwwzhang@mail.ustc.edu.cn}}
\end{document}